\newcommand{\Prob}{\ensuremath{\mathbb{P}}}
\newcommand{\E}{\ensuremath{\mathbb{E}}}
\newcommand{\C}{\ensuremath{\text{Cov}}}
\newtheorem{theorem}{Theorem}
\newtheorem{corollary}{Corollary}[theorem]
\theoremstyle{definition}
\newtheorem{proposition}{Proposition}[section]
\newtheorem{example}{Example}[section]
\theoremstyle{remark}
\newtheorem{remark}{Remark}[section]
\begin{document}
	\title{\textbf{Stopping Times Occurring Simultaneously}}
	\author{
	Philip Protter\thanks{Supported in part by NSF grant DMS-2106433} \\
	\small{Department of Statistics}\\
	\small{Columbia University}\\
	\small{New York, NY, 10027}\\
	\small{ORCID 0000-0003-1344-0403}\\
	\small{pep2117@columbia.edu}
	\and
	Alejandra Quintos\thanks{Supported in part  by the Office of the Vice Chancellor for Research and Graduate Education at the University of Wisconsin-Madison with funding from the Wisconsin Alumni Research Foundation and by the Fulbright-García Robles Program} \thanks{Corresponding author} \\
	\small{Department of Statistics}\\
	\small{University of Wisconsin-Madison}\\
	\small{Madison, WI, 53706}\\
	\small{ORCID 0000-0003-3447-3255}\\
	\small{alejandra.quintos@wisc.edu}
}


\date{April 15, 2024}
\maketitle

\section*{ABSTRACT}
Stopping times are used in applications to model random arrivals. A standard assumption in many models is that they are conditionally independent, given an underlying filtration. This is a widely useful assumption, but there are circumstances where it seems to be unnecessarily strong. We use a modified Cox construction along with the bivariate exponential introduced by Marshall and Olkin (1967) to create a family of stopping times, which are not necessarily conditionally independent, allowing for a positive probability for them to be equal. We show that our initial construction only allows for positive dependence between stopping times, but we also propose a joint distribution that allows for negative dependence while preserving the property of non-zero probability of equality. We indicate applications to modeling COVID-19 contagion (and epidemics in general), civil engineering, and to credit risk.

\pagebreak 

\section{INTRODUCTION}

Probability models are ubiquitous in modern society.  The timing of a random event is often crucial to the analysis of the reliability of a system or to the danger of a default within a credit risk context. Such random times are, of course, referred to as stopping times. Some examples of random times of intrinsic interest range from the quotidian banal such as bus arrivals, customers arriving at a restaurant, to the less banal, such as the time a cancer metastasizes, or an individual contracts a contagious disease such as COVID-19. Stopping times appear in Civil Engineering as they model metal fatigue in aircraft, the times of collapse of a bridge, or extreme events such as the recent collapse of the condominium towers in Surfside Florida. A particularly common use of stopping times is in the theory of Credit Risk, within the discipline of Mathematical Finance. 

Indeed, we take the approach pioneered by researchers in Credit Risk, the seminal event being the publication of the book by David Lando in 2004 (see \cite{Lando.Credit.Risk.Book}). The ``Cox Construction" that Lando uses begins with a filtration of observable events, satisfying the usual hypotheses (see Protter \cite{PhilipBook}) for the formal definition of the usual hypotheses). To fix notation, let $\left(\Omega, \mathcal{F}, \Prob, (\mathcal{F}_t)_{t\geq 0}\right)$  be the probability space on which $\left(\mathcal{F}_t\right)_{t\geq 0}$ is the filtration of observable events. We will add a random time $\tau$ to our model by creating an exponential random variable $Z$ (of parameter 1) that is independent of $ \mathcal{F}$ and all of $\mathbb{F}=(\mathcal{F}_t)_{t\geq 0}$. We choose an $\mathcal{F}_t$ predictable increasing process $\left(A_t\right)_{t\geq 0}$, and we create the desired random time $\tau$ by writing:
\begin{equation}\label{ph1}
	\tau = \inf_{t\geq 0}\{A_t\geq Z\}. 
\end{equation}
$\tau$ is then a totally inaccessible stopping time for an enlarged (by $Z$) filtration $\mathbb{G}=(\mathcal{G}_t)_{t\geq 0}$, this is
\begin{equation*}
	\mathcal{G}_t=\mathcal{F}_t\vee\sigma(Z).
\end{equation*}
With $\tau$ being a stopping time, the process $1_{\{\tau\geq t\}}$ is adapted and non-decreasing, hence it is a submartingale, whence there exists, via the Doob-Meyer decomposition theorem, a unique, $\mathbb{G}$-predictable, increasing process $(C_t)_{t\geq 0}$ such that $1_{\{\tau\geq t\}}-C_t$ is a martingale. It is easy to show that the process $\left(C_t\right)_{t\geq 0}$ is in fact the process $\left(A_t\right)_{t\geq 0}$ of \eqref{ph1}. The process $\left(A_t\right)_{t\geq 0}$ is called the compensator of the stopping time $\tau$.

If we repeat this procedure to construct two stopping time $\tau_1$ and $\tau_2$, using two independent exponentials of parameter 1, $Z_1$ and $Z_2$, which are independent of each other as well as the underlying filtration $\mathbb{F}$, then we have two totally inaccessible stopping times which are also conditionally independent of each other, and we have $\Prob(\tau_1=\tau_2)=0.$ This is the approach of Lando and many others, with some notable exceptions such as Bielecki et al. \cite{Bielecki.Cousin.Crepey.Herbertsson} and Jiao and Li \cite{Jiao.Li}. 

In applications, often the process $\left(A_t\right)_{t\geq 0}$ is assumed to be of the form 
\begin{equation}\label{ph2}
	A_t=\int_0^t\alpha_s ds, \text{ for an adapted process }\alpha_s\geq 0\text{ all }s\geq 0.
\end{equation}
Sufficient conditions for $A_t$ to be of the form (\ref{ph2}) are known (see, e.g., Ethier and Kurtz \cite{Ethier.Kurtz}, Guo and Zeng \cite{Guo.Zeng}, Zeng \cite{Zeng}, and for a general result, Janson et al. \cite{Janson.Baye.Protter}).

In this paper we are concerned with models where one can have $\Prob(\tau_1=\tau_2)>0$, and some of the ramifications of such a model. This model arises in applications when $\eta_1$ and $\eta_2$ are constructed as functionals of Cox processes with independent exponentials $Z_1$ and $Z_2$, but with an added complication: there is a third stopping time $\eta_3$, and $\tau_1=\eta_1\wedge\eta_3$, while $\tau_2=\eta_2\wedge\eta_3$. This is a natural situation in Credit Risk for example, as we indicate in Section 3, but also in other domains, such as Civil Engineering, and disease contagion which we treat in Section \ref{Sect.Application}. The resulting stopping times of interest, $\tau_1$ and $\tau_2$, are no longer conditionally independent, and in simple cases, the bivariate exponential distribution of Marshall and Olkin \cite{MarshallOlkin} comes into play. These models do not have densities in $\mathbb{R}_+\times\mathbb{R}_+$, leading to a two dimensional cumulative distribution function with a singular component. We explore the consequences of such a phenomenon in some detail, and we explain its utility for various kinds of applications caused by the confluence of stopping times that arise naturally  in the modeling of random events.

One could argue that it is not of vital importance to have two stopping times with a positive probability of being equal, but rather just to have them be close to each other, even arbitrarily close. We study this situation in Section \ref{Subsect.Distance.Between.ST}.

In many easy to imagine examples, the times $\tau_1$ and $\tau_2$ are positively correlated. It is possible to imagine, however, situations where they would naturally be negatively correlated. To cover that situation, we slightly modify our constructions, as exhibited in Section \ref{Sect.A.More.General.Distribution}. As an example, consider the recent scandal with the Boeing 737 Max airplanes. The two horrifically deadly crashes occurred with the confluence of the airplane's software malfunctioning and the panic of inexperienced and poorly trained pilots. Since airplanes are so carefully constructed, software failures are rare, which made the Boeing 737 Max failures unanticipated; however, unfortunately, poorly trained pilots have recently become rather common, especially in airlines based in poorly regulated countries. (The two examples were Lion Air in Malaysia and Ethiopian Airlines, in Ethiopia). That duality between those ``rare" and ``common" events is due to their negative correlation.

Our work in this paper falls into a popular thread of prior research. Cox and Lewis \cite{CoxLewis} studied the case of multiple event types occurring in a continuum, but differently from us, they do not generalize the model proposed by Marshall and Olkin \cite{MarshallOlkin}. Moreover, they mostly consider ``regular" processes, i.e., where events cannot occur at the same time, which is an important contribution of our work. Diggle and Milne \cite{Diggle.Milne} also proposed a multivariate version of the Cox Process but their model does not allow for $\Prob(\tau_1=\tau_2)>0$ either. Another example of a multivariate point process is given in Brown et al. \cite{Brown.Silverman.Milne}.

There is existing work on modeling simultaneous defaults, but different from us, under Merton's structural risk model (see Li \cite{Weiping} and Bielecki et al. \cite{Bielecki2018}). Kay Giesecke \cite{Giesecke74}, in a seminal paper concerning Credit Risk published in 2003, was the first (to our knowledge) to consider the Marshall and Olkin model of the bivariate exponential. Later, in 2013, Bielecki et al. \cite{Bielecki.Cousin.Crepey.Herbertsson} also worked in a similar model, which is also discussed in Chapter 8 of Crépey et al. \cite{Crepey.Bielecki.Brigo.Book}. Sun et al. \cite{Sun_Mendoza-Arriaga_Linetsky_2017} in 2017 developed a similar model to the one we present here, but restricted their attention to multidimensional Lévy subordinator processes. In this paper, we develop the ideas present in Giesecke \cite{Giesecke74}, in Bielecki et al. \cite{Bielecki.Cousin.Crepey.Herbertsson}, and in Crépey et al. \cite{Crepey.Bielecki.Brigo.Book}, and go  beyond them.

Brigo et al. \cite{Brigo2016} argued that a reasonable trade-off between applicability and model flexibility might consist in modeling a multivariate survival indicator process such that each subvector of it is a continuous-time Markov chain, which, as they show, is a property of the Marshall–Olkin distribution. Hence, they claimed that the Marshall–Olkin distribution is the natural and unique choice for modeling default times. This strengthens our choice of joint distribution of the stopping times.

Another closely related work to ours is Lindskog and McNeil \cite{Lindskog.McNeil} who consider a Poisson shock model of arbitrary dimension with both fatal and not-necessarily-fatal shocks. Jiao and Li \cite{Jiao.Li} also allow for simultaneous defaults without using a pure Cox process construction. Inspired by the Jacod Criterion, they use what is known as a density approach, and they can include cases where a stopping time meets another stopping time, specified in advance.

There are also other types of generalizations of Cox Processes (see Gueye and Jeanblanc \cite{Gueye.Jeanblanc}), where they generalize, not the number of stopping times, but the form of the process $A_t$. They assume that $A_t$ is not necessarily of the form given in equation (\ref{ph2}).

The organization of this paper is as follows. Section \ref{Sect.Survival.Function}, presents the survival function of two conditionally dependent (as opposed to conditionally independent) stopping times, an interpretation of it, a decomposition of it into its singular and absolutely continuous parts, and a series of results exploring the special properties such a modeling approach has. For example, not only do we treat the case where $\Prob(\tau_1=\tau_2)>0$, but more generally we study when the two stopping times are `close' to each other, in various metrics. Section \ref{Sect.Generalization} provides two generalizations of our model. In the first one, we extend to an arbitrary (but finite) number of such stopping times and in the second one, we allow for a slightly different dependency between the stopping times. Section \ref{Sect.Application} shows the applicability of our results by providing examples in Epidemiology (such as the case of COVID-19 and its variants) and in Civil Engineering (e.g., the recent condo collapse of Champlain Towers in Florida).

\section{THE SURVIVAL FUNCTION} \label{Sect.Survival.Function}
As a starting point, we will consider the case of having two stopping times, which we will generalize to $n$ stopping times in Section \ref{Sect.Generalization.K.ST}. Let us fix a filtered probability space $(\Omega,\mathcal{F},\Prob,\mathbb{F})$ large enough to support an $\mathbb{R}^d$-valued càdlàg stochastic process $X=\left\{X_t, t \geq 0\right\}$ and three i.i.d exponential random variables with parameter 1 ($Z_i, \ i=1,2,3$) and independent of $X$. Then, define:

\begin{equation} \label{Main.Model}
	\tau_1 := \min \left( \eta_1, \eta_3 \right) \quad \textnormal{and} \quad \tau_2 := \min \left( \eta_2, \eta_3 \right)
\end{equation}
where
\begin{equation} \label{Initial.Definitions}
	\eta_i:=\inf\left\{ s: A^i_s \geq Z_i \right\}, \qquad A^i_s:=\int_0^s \alpha^i(X_r) dr, \qquad Z_i \overset{\text{iid}}{\sim}\text{Exp}(1)
\end{equation}
and
\begin{equation*}
	\alpha^i: \mathbb{R}^d \mapsto [0, \infty) \text{ for } i=1,2,3 \text{ are non-random positive continuous functions.}
\end{equation*}
Note that this characterization of $\alpha^i$ implies that $A^i_s$ are continuous and strictly increasing.
\begin{theorem} [Survival function] \label{Thm.Survival Fn tau1, tau2}
	Using the definitions above and assuming that $\lim_{s\rightarrow\infty}A_s=\infty$ a.s. and that $A^i_s < \infty \ \forall s \geq 0$. Then,
	\begin{equation*} 
		\overline{F}_{(\tau_1, \tau_2)}(s,t):= \Prob\left( \tau_1 > s, \tau_2 >t\right) = \E \left[\exp\left[ -A^1_s -A^2_t - A^3_{s \vee t} \right] \right] .
	\end{equation*}
\end{theorem}
\begin{proof}
	Let $\tilde{\Prob}_{(s,t)}(\cdot):= \Prob\left(\cdot |(X_u)_{0\leq u \leq (s \vee t)} \right)$. By definition of $\tau_1$ and $\tau_2$, we have,
	\begin{align*} 
		\tilde{\Prob}_{(s,t)}\left( \tau_1 > s, \tau_2 >t\right) & =  \tilde{\Prob}_{(s,t)} \left( \min(\eta_1, \eta_3) > s, \min(\eta_2, \eta_3) >t \right) \nonumber \\
		& = \tilde{\Prob}_{(s,t)} \left( \eta_1>s \right) \tilde{\Prob}_{(s,t)} \left( \eta_2>t \right) \tilde{\Prob}_{(s,t)} \left( \eta_3>s, \eta_3>t \right) \nonumber \\
		& = \tilde{\Prob}_{(s,t)} \left( \eta_1>s \right) \tilde{\Prob}_{(s,t)} \left( \eta_2>t \right) \tilde{\Prob}_{(s,t)} \left( \eta_3>\max(s,t) \right) \nonumber \\
		& = \exp\left[ -A^1_s -A^2_t - A^3_{s \vee t} \right] \nonumber \\
		& = \begin{cases}
			\exp\left[ -A^1_s -A^2_t - A^3_s  \right] & s>t \\
			\exp\left[ -A^1_s -A^2_t - A^3_t \right] & s<t.
		\end{cases}
	\end{align*}
	The result follows by taking the expectation.
\end{proof}

\begin{remark}
	From Theorem \ref{Thm.Survival Fn tau1, tau2}, it is clear that $\tau_1$ is not independent of $\tau_2$ and, as we will show in Subsection (\ref{Decomposition.Joint.Dist}), their joint distribution has an absolutely continuous and a non-trivial singular part, which means that:
	\begin{equation}
		\Prob\left( \tau_1 = \tau_2 \right) >0.
	\end{equation}
\end{remark}

\begin{remark}
	Note that when $X_t = t \in \mathbb{R}^+  $ (i.e., there is no randomness coming from $X_t$), Theorem \ref{Thm.Survival Fn tau1, tau2} is a generalization of the bivariate exponential (BVE) distribution introduced by Marshall and Olkin \cite{MarshallOlkin}. In the specific case where $\alpha^i(X_s)=\alpha^i\in \mathbb{R}$. i.e., the intensity is constant, we recover the Marshall and Olkin BVE with parameter $(\alpha^1, \alpha^2, \alpha^3)$. This is $(\tau_1, \tau_2)\sim\text{BVE}(\alpha^1, \alpha^2, \alpha^3)$.
\end{remark}

\begin{remark}
	The marginal distribution of $\tau_i$ is given by:
	\begin{align} \label{Marginal.Survival.Function}
		\Prob& (\tau_i>s)  = \Prob(\min(\eta_i, \eta_3) > s)= \Prob \left( \eta_i >s, \eta_3>s \right) = \E \left[ \Prob\left( \eta_i > s, \eta_3> s \mid \left( X_u \right)_{u\leq s} \right) \right] \nonumber \\
		&= \E \left[ \Prob\left( \eta_i > s \mid \left( X_u \right)_{u\leq s} \right) \Prob\left( \eta_3 > s \mid \left( X_u \right)_{u\leq s} \right) \right] = \E \left[\exp\left(-\int_0^s (\alpha^i+\alpha^3)(r)dr\right) \right] 
	\end{align}
	which coincides with the marginal distribution in the Cox construction, see Lando \cite{Lando.Cox} \cite{Lando.Credit.Risk.Book}. That is, each of the stopping times is functional of a Cox process. However, by construction, they are not independent of each other.
\end{remark}

\begin{remark}
	$\alpha^i()$ is usually known as the intensity of $\eta_i$ in the progressive enlargement of $\mathbb{F}$ with $\eta_i$. (See Corollary 2.26 in Aksamit and Jeanblanc \cite{Aksamit.Jeanblanc}).
\end{remark}

\begin{remark} \label{Intensity.Tau.i}
	The intensity of $\tau_i$ for $i=1,2$ in the progressive enlargement of $\mathbb{F}$ with $\tau_i$ is $\left(\alpha^i +\alpha^3\right) \left(\cdot\right)$.
\end{remark}

\textbf{Caveat.} In the rest of the paper, for easiness of notation, sometimes we write $\alpha^i_s$ instead of $\alpha^i\left(X_s\right)$ for $i=1,2,3$.

\subsection{Interpretation of Joint Distribution} \label{Interpretation.Distribution}
A way to interpret the distribution given in Theorem \ref{Thm.Survival Fn tau1, tau2} is the following: Suppose we have a two-component system where each component's life is represented by $\tau_1$ and $\tau_2$ respectively. Any component dies after receiving a shock. Shocks are governed by three independent Cox processes $\Lambda_1\left(t, \alpha^1(X_t)\right)$, $\Lambda_2\left(t, \alpha^2(X_t)\right)$ and $\Lambda_3\left(t, \alpha^3(X_t)\right)$, in other words, $\Lambda_i \left(t, \alpha^i\left(X_t\right) \right)$ is an inhomogeneous Poisson process with stochastic intensity $\alpha^i \left( X_t \right)$. For $j=1,2$, $\Lambda_j\left(t, \alpha^j(X_t)\right)$ represents the number of shocks through time that affect only component $j$, while $\Lambda_3\left(t, \alpha^3(X_t)\right)$ represents the number of shocks through time that affect both components.

Events in the process $\Lambda_1\left(t, \alpha^1(X_t)\right)$ are shocks to component 1, events in the process $\Lambda_2\left(t, \alpha^2(X_t)\right)$ are shocks to component 2 and events in the process $\Lambda_3\left(t, \alpha^3(X_t)\right)$ are shocks to both components.

Hence, we get,
\begin{multline}
	\Prob\left( \tau_1 > s, \tau_2>t \right) = \E \left[ \Prob\left( \tau_1 > s, \tau_2>t | (X_u)_{0\leq u \leq (s \vee t)} \right)   \right]  \\
	= \E \left[ \tilde{\Prob}_{(s,t)}\left[\Lambda_1\left(s, \alpha^1(X_s)\right)=0\right] \tilde{\Prob}_{(s,t)}\left[\Lambda_2\left(t, \alpha^2(X_t)\right)=0\right]\tilde{\Prob}_{(s,t)}\left[\Lambda_3\left(s\vee t, \alpha^3(X_{s\vee t})\right)=0\right] \right]    \\
	= \E\left[ \exp \left( - A^1_s - A^2_t - A^3_{s \vee t}\right) \right]
\end{multline}
Which coincides with Theorem \ref{Thm.Survival Fn tau1, tau2}. In the second line of the previous expression $\tilde{\Prob}_{(s,t)}(\cdot)$ means $\Prob\left(\cdot | (X_u)_{0\leq u \leq (s \vee t)} \right)$.

\subsection{Decomposition of the Joint Distribution} \label{Decomposition.Joint.Dist}
As found in Theorem \ref{Thm.Survival Fn tau1, tau2} and assuming that $X_t = t \in \mathbb{R}^+$, we have that the joint survival function is:
\begin{equation*} 
	\overline{F}_{(\tau_1, \tau_2)}(s,t) =  \exp\left[ -A^1_s -A^2_t - A^3_{s \vee t} \right] 
\end{equation*}
\begin{theorem} \label{Theorem.Joint.Distribution}
	Under the conditions of Theorem \ref{Thm.Survival Fn tau1, tau2} and assuming that $X_t = t \in \mathbb{R}$, the absolutely continuous and singular parts of $\overline{F}_{(\tau_1, \tau_2)}(s,t)$ are given by:
	\begin{equation}
		\overline{F}_{(\tau_1, \tau_2)}(s,t) = \beta \overline{F}_{\text{a}}(s,t) + (1-\beta) \overline{F}_{\text{sing}}(s,t)
	\end{equation}
	where
	\begin{align*}
		\beta  :& = \int\limits_0^\infty \left(\alpha^1_s+\alpha^2_s\right)  \exp\left[-A^1_s - A^2_s - A^3_s  \right] ds \\
		\overline{F}_{\text{a}}(s,t) & = \frac{1}{\beta} \left[e^{-A^1_s-A^2_t-A^3_{s \vee t} }- \int_{s \vee t }^\infty \alpha^3_x \exp \left(-\sum_{i=1}^3 A^i_x \right) dx \right] \\
		\overline{F}_{\text{sing}}(s,t) & = \frac{1}{1-\beta}\left[ \int_{s \vee t }^\infty \alpha^3_x \exp \left[-A^1_x-A^2_x-A^3_x \right] dx \right].
	\end{align*}
\end{theorem}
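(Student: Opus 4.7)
The plan is to decompose by conditioning on whether the two stopping times coincide. Since each $\eta_i$ has a continuous law (density $\alpha^i_x e^{-A^i_x}$), a short case analysis on the four orderings of $\eta_3$ against $\eta_1,\eta_2$ shows that, modulo a null set,
\[
\{\tau_1 = \tau_2\} = \{\eta_3 \leq \eta_1 \wedge \eta_2\},
\]
with $\tau_1 = \tau_2 = \eta_3$ on this event. The singular part of the bivariate law should capture precisely this contribution, and the absolutely continuous part the complement.

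The first concrete step is to compute
\[
\Prob(\tau_1 > s,\ \tau_2 > t,\ \tau_1 = \tau_2) = \Prob(\eta_3 > s \vee t,\ \eta_1 \geq \eta_3,\ \eta_2 \geq \eta_3).
\]
Since $X_t=t$ makes the $\eta_i$ genuinely independent with the intensities listed, I would condition on the value of $\eta_3$ and integrate the product of the tail probabilities of $\eta_1,\eta_2$ against the density of $\eta_3$, obtaining
\[
\int_{s \vee t}^\infty \alpha^3_x \exp\!\Bigl(-A^1_x - A^2_x - A^3_x\Bigr) dx.
\]
Setting $s=t=0$ and using the telescoping identity $\int_0^\infty \sum_i \alpha^i_x \exp(-\sum_i A^i_x)\,dx = 1$ (valid because $A_\infty=\infty$) then gives $\Prob(\tau_1=\tau_2)=1-\beta$ with the advertised $\beta=\int_0^\infty(\alpha^1_x+\alpha^2_x)\exp(-\sum_i A^i_x)\,dx$. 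Defining $(1-\beta)\overline{F}_{\text{sing}}$ to equal the singular probability above and $\beta\overline{F}_{\text{a.a.}} := \overline{F}_{(\tau_1,\tau_2)} - (1-\beta)\overline{F}_{\text{sing}}$, the explicit formulas from Theorem \ref{Thm.Survival Fn tau1, tau2} yield exactly the expressions in the statement.

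It remains to verify the qualitative claims. For singularity, note that $(1-\beta)\overline{F}_{\text{sing}}(s,t)$ depends only on $s\vee t$, so by inclusion–exclusion the associated measure assigns mass zero to any rectangle lying strictly above or below the diagonal; hence the measure is concentrated on $\{(u,u):u\geq 0\}$, a Lebesgue null set. For absolute continuity of $\overline{F}_{\text{a.a.}}$, I would compute the mixed partial $\partial^2/\partial s\,\partial t$ separately in the regions $\{s<t\}$ and $\{s>t\}$; in each region the result is a continuous nonnegative density, and a direct check shows the one-sided limits at $s=t$ coincide, so the density extends continuously to $\mathbb{R}_+^2\setminus\{s=t\}$, a set of full Lebesgue measure. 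Unit total mass and non-negativity follow from the construction.

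The main obstacle is the kink of $s\vee t$ on the diagonal: a naive mixed partial of $e^{-A^1_s - A^2_t - A^3_{s\vee t}}$ produces a Dirac contribution along $\{s=t\}$, which is exactly why the joint law has a singular component. The role of subtracting $\int_{s\vee t}^\infty \alpha^3_x e^{-\sum_i A^i_x}\,dx$ in the definition of the a.a.\ part is precisely to absorb this Dirac contribution; the computational heart of the proof is checking that this cancellation is exact, so that what remains is a \emph{bona fide} Lebesgue density.
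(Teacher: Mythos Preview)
Your approach is essentially the paper's: condition on the event $\{\eta_3\le\eta_1\wedge\eta_2\}=\{\tau_1=\tau_2\}$, compute the resulting survival probability $\int_{s\vee t}^\infty\alpha^3_x e^{-\sum A^i_x}\,dx$ by integrating against the density of $\eta_3$, identify $1-\beta$ as its value at $s=t=0$, and obtain the absolutely continuous piece by subtraction. The paper frames this via the conditioning $\overline{F}=\Prob(\cdot\mid B)\Prob(B)+\Prob(\cdot\mid B^c)\Prob(B^c)$ with $B^c=\{\Lambda_3\le\min(\Lambda_1,\Lambda_2)\}$ and computes $\Prob(B^c)$ by a triple integral rather than your telescoping identity, but the content is identical.

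One small correction: your claim that the one-sided limits of the mixed partial of $\overline{F}_{\text{a.a.}}$ agree on the diagonal is false in general. Off the diagonal the singular part contributes nothing to $\partial_s\partial_t$, so the mixed partial of $\beta\overline{F}_{\text{a.a.}}$ coincides there with that of $\overline{F}$; from $\{s>t\}$ one gets $(\alpha^1_s+\alpha^3_s)\alpha^2_s e^{-\sum A^i_s}$ at $s=t$, while from $\{s<t\}$ one gets $(\alpha^2_s+\alpha^3_s)\alpha^1_s e^{-\sum A^i_s}$, and these differ unless $\alpha^1_s=\alpha^2_s$. This does not damage the argument---a density may be discontinuous on a null set---but the continuous extension you assert does not exist, and you should instead rely directly on the fact that, by construction, $\beta\overline{F}_{\text{a.a.}}(s,t)=\Prob(\tau_1>s,\tau_2>t,\tau_1\neq\tau_2)$ places no mass on the diagonal. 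Your final paragraph already identifies this cancellation as the real point; just drop the continuity claim.
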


\begin{proof}
	Let $\Lambda_i$ be the waiting time to the first shock in the process $\Lambda_i(t, \alpha^i_t)$ defined in Section \ref{Interpretation.Distribution}. (Recall that we are assuming that $X_t = t$). By the assumptions made in Section \ref{Interpretation.Distribution}, $\Lambda_i$ are independent and the survival function of each one of them is given by:
	\begin{equation}
		\Prob\left(\Lambda_i > t \right) = \exp \left[-A^i_t\right] \text{ for all } t\geq 0.
	\end{equation}
	Hence, the density is equal to:
	\begin{equation} \label{Density.Lambda.i}
		f_{\Lambda_i}(s)  =\alpha^i_s\exp\left( -A^i_s \right) \text{ for all } s\geq 0.
	\end{equation}
	$\overline{F}_{(\tau_1, \tau_2)}(s,t)$ can be written as,
	\begin{equation}
		\overline{F}_{(\tau_1, \tau_2)}(s,t) = \Prob\left( \tau_1> s, \tau_2> t | B \right) \Prob\left( B \right) + \Prob\left( \tau_1> s, \tau_2> t | B^c \right) \Prob\left(B^c\right)
	\end{equation}
	where $B: = \left\{ \Lambda_3 > \min(\Lambda_1, \Lambda_2) \right\}$ and $B^c = \left\{ \Lambda_3 \leq \min(\Lambda_1, \Lambda_2) \right\}$ \par
	Now, using the density given in \eqref{Density.Lambda.i}:
	\begin{align}
		\Prob\left( B^c \right)  = & \Prob\left( \Lambda_3\leq \Lambda_1, \ \Lambda_3\leq \Lambda_2 \right) \nonumber \\
		= & \Prob\left( \Lambda_3 \leq \Lambda_1 \leq \Lambda_2 \right) + \Prob\left( \Lambda_3 \leq \Lambda_2 \leq \Lambda_1 \right) \nonumber \\
		= & \int\limits_0^\infty \int\limits_z^\infty \int\limits_x^\infty \alpha^1_x \alpha^2_y\alpha^3_z\exp\left[ -A^1_x -A^2_y -A^3_z \right] dy dx dz \nonumber \\
		& + \int\limits_0^\infty \int\limits_z^\infty \int\limits_y^\infty \alpha^1_x \alpha^2_y\alpha^3_z\exp\left[ -A^1_x -A^2_y -A^3_z \right] dx dy dz \nonumber \\
		= & \int_0^\infty \alpha^3_s \exp\left[ -A^1_s - A^2_s -A^3_s \right] ds
	\end{align}
	From this, $\Prob\left( B \right) = \int_0^\infty \left( \alpha^1_s + \alpha^2_s \right) \exp\left[ -A^1_s - A^2_s -A^3_s \right] ds$
	
	Since $\Prob\left(\min(\Lambda_1, \Lambda_2, \Lambda_3)>t\right) = \prod_{i=1}^3 \Prob\left(\Lambda_i>t \right) = \exp\left[-\sum_{i=1}^3 A^i_t \right]$, we get
	\begin{align}
		\Prob\left( \tau_1>s,\tau_2>t | B^c \right) & = \Prob\left[ \Lambda_1>s, \Lambda_2>t, \Lambda_3 > (s\vee t) | \Lambda_3\leq \min(\Lambda_1, \Lambda_2) \right] \nonumber \\
		&= \Prob\left[  \Lambda_3 > (s\vee t) | \Lambda_3\leq \min(\Lambda_1, \Lambda_2) \right] \nonumber \\
		& = \frac{\Prob\left[\left( s \vee t \right) < \Lambda_3 \leq \min(\Lambda_1, \Lambda_2)  \right]}{\Prob\left[ \Lambda_3 \leq \min(\Lambda_1, \Lambda_2) \right]} \nonumber \\
		& = \frac{\int_{s \vee t }^\infty \alpha^3_x \exp \left[-A^1_x-A^2_x-A^3_x \right] dx }{\int_{0}^\infty \alpha^3_x \exp \left[-A^1_x-A^2_x-A^3_x \right] dx}
	\end{align}
	With all these elements, $\Prob\left( \tau_1>s,\tau_2>t | B \right)$ is obtained by subtraction. 
	
	We can verify that $\Prob\left( \tau_1>s,\tau_2>t | B^c \right)$ is a singular distribution since its mixed second partial derivative is zero when $s\neq t$. Conversely, $\Prob\left( \tau_1>s,\tau_2>t | B \right)$ is absolutely continuous since its mixed second partial derivative is a density.
\end{proof}

\begin{remark}
	The value of $\Prob\left( B^c \right)$ corresponds to $\Prob(\tau_1 = \tau_2)$. This is, 
	\begin{equation} \label{Prob.Tau1.Equal.Tau2}
		\Prob(\tau_1 = \tau_2 ) = \int_0^\infty \alpha^3_s \exp\left[ -A^1_s - A^2_s -A^3_s \right] ds.
	\end{equation}
\end{remark}
\begin{corollary} \label{Prob.tau1.equal.tau2.random}
	Let $\alpha^i_u:= \alpha^i(X_u)$, where $\alpha^i(\cdot)$ is a positive continuous function and $X$ is an $\mathbb{R}^d$-valued stochastic process adapted to the filtration $\mathbb{F}$ and independent of $Z_1, Z_2 \text{ and } Z_3$. Then,
	\begin{equation*} 
		\Prob\left( \tau_1 = \tau_2 \right) = \E \left(\int_0^\infty \alpha^3_s \exp\left[ -A^{1}_s - A^2_s -A^3_s \right] ds\right).
	\end{equation*}
\end{corollary}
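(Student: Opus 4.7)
The plan is to reduce the claim to equation~\eqref{Prob.Tau1.Equal.Tau2}, which already handles the case when $\alpha^i$ is non-random, by conditioning on the entire sample path of $X$. Set $\mathcal{H}:=\sigma(X_u : u \geq 0)$. Since $X$ is by hypothesis independent of $(Z_1,Z_2,Z_3)$ and the three exponentials are mutually independent, a regular conditional probability given $\mathcal{H}$ exists and, under it, $(Z_1,Z_2,Z_3)$ is still i.i.d.\ $\mathrm{Exp}(1)$, while for each $\omega$ the function $s\mapsto A^i_s(\omega)=\int_0^s\alpha^i(X_r(\omega))\,dr$ is deterministic, continuous, strictly increasing, and tends to infinity (by positivity and continuity of $\alpha^i$, together with the hypothesis inherited from Theorem~\ref{Thm.Survival Fn tau1, tau2}).

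Inside this regular conditional probability we are therefore exactly in the setting of Section~\ref{Decomposition.Joint.Dist}, with the deterministic maps $s\mapsto A^i_s(\omega)$ playing the role of the compensators. Applying \eqref{Prob.Tau1.Equal.Tau2} conditionally yields
\begin{equation*}
	\Prob\bigl(\tau_1=\tau_2 \mid \mathcal{H}\bigr) \;=\; \int_0^\infty \alpha^3(X_s)\,\exp\bigl[-A^1_s-A^2_s-A^3_s\bigr]\,ds \qquad \text{a.s.}
\end{equation*}
Taking expectations and invoking the tower property gives the stated formula. The integrand is nonnegative, so Fubini--Tonelli lets one commute $\E$ with $\int_0^\infty ds$ if a reordering is desired.

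The only delicate step is the legitimacy of ``apply \eqref{Prob.Tau1.Equal.Tau2} conditionally.'' This requires (i) the existence of a regular version of $\Prob(\cdot\mid\mathcal{H})$, which is standard because the $Z_i$ are real-valued; (ii) measurability of the event $\{\tau_1=\tau_2\}$, which we identify with $\{\eta_3\le\eta_1\wedge\eta_2\}$ modulo a null set by noting that conditionally on $\mathcal{H}$ each $\eta_i$ has a density (hence $\Prob(\eta_1=\eta_2\mid\mathcal{H})=0$); and (iii) checking that the deterministic derivation leading to \eqref{Prob.Tau1.Equal.Tau2} relies only on structural properties---independence of the $Z_i$ and continuity/monotonicity of the $A^i$---that survive conditioning on $\mathcal{H}$. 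This bookkeeping is the main thing to be careful about; once done, the corollary follows without further computation.
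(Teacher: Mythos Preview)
Your argument is correct and matches the paper's own proof: the paper likewise conditions on the full path $(X_u)_{u\ge0}$, invokes the computation of $\Prob(B^c)$ from Theorem~\ref{Theorem.Joint.Distribution} to obtain $\Prob(\tau_1=\tau_2\mid (X_u)_{u\ge0})=\int_0^\infty \alpha^3_s\,e^{-A^1_s-A^2_s-A^3_s}\,ds$, and then takes expectations. Your write-up simply adds the technical justifications (regular conditional probability, measurability of $\{\tau_1=\tau_2\}$, and that the needed structural properties survive conditioning) that the paper leaves implicit.
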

\begin{proof}
	By a similar calculation to get $\Prob(B^c)$ in the proof of Theorem \ref{Theorem.Joint.Distribution}, we get
	\begin{equation*}
		\Prob\left( \tau_1 = \tau_2  | (X_u)_{u \geq 0}\right) = \int_0^\infty \alpha^3_s \exp\left[ -A^{1}_s - A^2_s -A^3_s \right] ds.
	\end{equation*}
	The result follows by taking the expectation.
\end{proof}

\subsection{Estimating the Probability of Equality in Two Stopping Times} \label{Subsect.Singular.Part}
Now, we are interested in finding estimates for $\Prob(\tau_1 =\tau_2)$ (see equation (\ref{Prob.Tau1.Equal.Tau2}) in Section \ref{Decomposition.Joint.Dist}) under different assumptions for $\alpha^i(\cdot)$ (the intensity of $\eta_i$) or $A^i_s$ (the compensator of $\eta_i$). As in Theorem \ref{Thm.Survival Fn tau1, tau2}, we assume $\lim_{s\rightarrow\infty}A^i_s=\infty$ a.s. for every $i=1,2,3$.

\begin{example}[Constant intensity] \label{Example.Constant.Intensity}
	If $\alpha^i(X_s)=\alpha^i \in \mathbb{R}^+$ for all $s\geq 0$, it follows that:
	\begin{align*}
		\Prob(\tau_1=\tau_2) &= \int_0^\infty \alpha^3  \exp\left[-(\alpha^1+\alpha^2+\alpha^3)s  \right] ds  \\
		& = \frac{\alpha^3}{\alpha^1+\alpha^2+\alpha^3}
	\end{align*}
\end{example}

\begin{example}[Same intensity] \label{Proposition.Same.Intensity}
	If $\alpha^1(X_s) = \alpha^2(X_s) =\alpha^3(X_s)=:\alpha(X_s) $ for all $s\geq 0$, we have that $A^1_s=A^2_s=A^3_s=: A_s$ for all $s\geq 0$. Moreover, it is straighforward to get,
	\begin{equation*}
		\Prob\left( \tau_1 = \tau_2 \right) = \frac{1}{3}
	\end{equation*}
\end{example}

\begin{example}[Bounded intensity]
	Assume $L_i \beta(X_s) \leq \alpha^i(X_s) \leq U_i \beta(X_s) $ a.s. for all $s\geq 0 $, for $\beta(\cdot)$ positive, non-random, and integrable with $\int_0^\infty \beta(X_s)ds = \infty$ a.s., where $L_i, U_i$ are positive real random variables, independent of everything else, and such that $0< L_i < U_i <  \infty$. Then, we get
	\begin{equation*}
		\E \left[\frac{\ell_3}{u_1+u_2+u_3} \right] \leq \Prob(\tau_1 =\tau_2) \leq \E \left[\frac{u_3}{\ell_1 +\ell_2 + \ell_3}\right]
	\end{equation*}
\end{example}

\begin{example}[Intensity bounded by sum of intensities]
	If $L \left[ \alpha^1(X_s) +\alpha^2(X_s)\right] \leq \alpha^3(X_s) \leq U \left[ \alpha^1(X_s) +\alpha^2(X_s) \right]$ for a.s. all $s\geq 0$ where $L, U$ are positive real random variables, independent of everything else, and such that $0<L < U < L +1 <  \infty$. Then:
	\begin{equation*}
		\E \left[ \frac{L}{U+1} \right ] \leq \Prob(\tau_1 = \tau_2) \leq \E \left[ \frac{U}{L+1} \right]
	\end{equation*}
\end{example}
The proofs of these last two examples are straightforward conditioning arguments and, hence, left to the reader.

\subsection{Conditional Probabilities} \label{Subsect.Conditional.Probability}
Through this Section,  let $\tilde{\Prob}(\cdot):=\Prob\left(\cdot| \left( X_u \right)_{ u \geq 0 } \right)$.

\begin{proposition} \label{Proposition.Tau1.equal.Tau2.until.t}
	\begin{equation*}
		\Prob\left(\tau_1=\tau_2, \tau_1\leq t \right) = \E\left[ \int_0^t \alpha^3_s \exp\left[ -A^1_s - A^2_s -A^3_s \right] ds \right]
	\end{equation*}     
\end{proposition}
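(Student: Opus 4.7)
The plan is to mimic the proof of Corollary \ref{Prob.tau1.equal.tau2.random}, but track the additional constraint $\tau_1 \leq t$. First I would observe that by the definition \eqref{Main.Model} of $\tau_1$ and $\tau_2$, the event $\{\tau_1 = \tau_2\}$ coincides (up to a $\Prob$-null set) with the event $\{\eta_3 \leq \min(\eta_1,\eta_2)\}$: since $A^1, A^2, A^3$ are continuous and strictly increasing, the random variables $\eta_1, \eta_2, \eta_3$ are conditionally continuous given $X$, so ties between any two of them have conditional probability zero; hence equality $\min(\eta_1,\eta_3) = \min(\eta_2,\eta_3)$ forces $\eta_3$ to be the common minimum. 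On this event, $\tau_1 = \tau_2 = \eta_3$, so $\{\tau_1 = \tau_2,\ \tau_1 \leq t\} = \{\eta_3 \leq \eta_1,\ \eta_3 \leq \eta_2,\ \eta_3 \leq t\}$ up to a null set.

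Next I would condition on the path of $X$, writing $\tilde{\Prob}(\cdot) = \Prob(\cdot \mid (X_u)_{u\geq 0})$ as in the earlier proofs. Conditionally on $X$, the $Z_i$ remain i.i.d. Exp(1) and independent of $\sigma(X)$, so $\eta_1,\eta_2,\eta_3$ are conditionally independent with conditional survival functions $\tilde{\Prob}(\eta_i > s) = \exp(-A^i_s)$ and conditional densities $f_{\eta_i}(s) = \alpha^i_s \exp(-A^i_s)$ on $[0,\infty)$ (using the continuity and strict monotonicity of $A^i$ together with $A^i_\infty = \infty$ a.s.). I would then write
\begin{align*}
\tilde{\Prob}(\tau_1 = \tau_2,\ \tau_1 \leq t)
 &= \tilde{\Prob}(\eta_3 \leq \eta_1,\ \eta_3 \leq \eta_2,\ \eta_3 \leq t) \\
 &= \int_0^t \alpha^3_s \exp(-A^3_s)\, \tilde{\Prob}(\eta_1 > s)\, \tilde{\Prob}(\eta_2 > s)\, ds \\
 &= \int_0^t \alpha^3_s \exp\bigl[-A^1_s - A^2_s - A^3_s\bigr]\, ds,
\end{align*}
by integrating out $\eta_3$ against its conditional density and using the conditional independence of $\eta_1, \eta_2$ from $\eta_3$.

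Finally, I would take expectations and invoke Fubini (justified because the integrand is nonnegative) to move the expectation inside the $ds$-integral if desired, obtaining
\begin{equation*}
\Prob(\tau_1 = \tau_2,\ \tau_1 \leq t) = \E\Bigl[\int_0^t \alpha^3_s \exp[-A^1_s - A^2_s - A^3_s]\, ds\Bigr].
\end{equation*}

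The main obstacle is not the computation but the bookkeeping of the event $\{\tau_1 = \tau_2\}$: one must justify replacing $\{\tau_1 = \tau_2\}$ by $\{\eta_3 \leq \eta_1 \wedge \eta_2\}$ and identifying $\tau_1$ with $\eta_3$ on that event. This is where the continuity hypothesis on the $A^i$ is essential, since it eliminates the possibility of ties among the $\eta_i$. Once that identification is made, the remainder is a direct conditional-independence calculation parallel to the one already carried out in Theorem \ref{Theorem.Joint.Distribution} and Corollary \ref{Prob.tau1.equal.tau2.random}.
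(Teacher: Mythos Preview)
Your proof is correct and follows essentially the same route as the paper: condition on $X$, identify $\{\tau_1=\tau_2\}$ with $\{\eta_3 \leq \eta_1 \wedge \eta_2\}$ (so that $\tau_1=\eta_3$ on this event), integrate using the conditional densities of the $\eta_i$, and take expectations. The only difference is cosmetic: the paper computes $\tilde{\Prob}(\tau_1=\tau_2,\ \tau_1>t)$ first by splitting into the two orderings $\{t<\eta_3<\eta_1<\eta_2\}$ and $\{t<\eta_3<\eta_2<\eta_1\}$ and evaluating triple integrals, then passes to the complement, whereas you integrate $\eta_3$ directly over $[0,t]$ and factor out the survival probabilities of $\eta_1,\eta_2$ in one step. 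Your version is slightly more economical but not a different idea.
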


\begin{proof}
	By the definition of $\tau_i$, we have
	\begin{align*}
		\tilde{\Prob}(\tau_1=\tau_2,  \tau_1> t) = &  \tilde{\Prob}\left( \min(\eta_1, \eta_3) =\min(\eta_2, \eta_3), \min(\eta_1, \eta_3)> t\right)   \nonumber \\
		= &\tilde{\Prob}\left( \min(\eta_1, \eta_3) = \eta_3 =\min(\eta_2, \eta_3), \min(\eta_1, \eta_3)> t 
		\right) \nonumber \\
		= & \tilde{\Prob}\left( \eta_3 < \eta_1, \eta_3< \eta_2, \eta_1 > t, \eta_3 > t \right) \nonumber \\
		= & \tilde{\Prob}\left( t < \eta_3 < \eta_1 <\eta_2 \right) +\tilde{\Prob}\left( t < \eta_3 < \eta_2 <\eta_1 \right)
	\end{align*}
	Recall that, by definition, $\eta_i$ are independent with density under $\tilde{\Prob}$ equal to $\alpha^i_s\exp\left( -A^i_s \right)$
	\begin{align*}
		\tilde{\Prob}(\tau_1=\tau_2, \tau_1> t) = & \int\limits_t^\infty \int\limits_z^\infty \int\limits_x^\infty \alpha^1_x \alpha^2_y\alpha^3_z\exp\left[ -A^1_x -A^2_y -A^3_z \right] dy dx dz \nonumber \\
		& + \int\limits_t^\infty \int\limits_z^\infty \int\limits_y^\infty \alpha^1_x \alpha^2_y\alpha^3_z\exp\left[ -A^1_x -A^2_y -A^3_z \right] dx dy dz \nonumber \\
		= & \int_t^\infty \alpha^3_s \exp\left[ -A^1_s - A^2_s -A^3_s \right] ds
	\end{align*}
	This implies,
	\begin{equation*}
		\tilde{\Prob}(\tau_1=\tau_2, \tau_1\leq t)=\int_0^t \alpha^3_s \exp\left[ -A^1_s - A^2_s -A^3_s \right] ds
	\end{equation*}
	Take the expectation to get the result.
\end{proof}

\begin{remark}
	When taking the limit $t\rightarrow\infty$ in the previous proposition, we recover, as expected, the result of Corollary \ref{Prob.tau1.equal.tau2.random}.
\end{remark}

The following two propositions are straightforward applications of Proposition \ref{Proposition.Tau1.equal.Tau2.until.t}. Hence, the proofs are left to the reader.

\begin{proposition}
	\begin{equation*}
		\Prob\left( \tau_1 = \tau_2 | \tau_1 \leq t \right) = \E \left [\frac{\int_0^t \alpha^3_s \exp\left[ -A^1_s - A^2_s -A^3_s \right] ds}{1 - \exp[-A^1_t-A^3_t]}\right]
	\end{equation*}
\end{proposition}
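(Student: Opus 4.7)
The plan is to condition on the trajectory $(X_u)_{u\ge 0}$ exactly as in the preceding Proposition~\ref{Proposition.Tau1.equal.Tau2.until.t}, reduce to a pathwise computation, and then take a single expectation at the end. Throughout, I would keep the section's convention $\tilde\Prob(\cdot):=\Prob(\cdot\mid (X_u)_{u\ge 0})$.

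First, I would compute the denominator $\tilde\Prob(\tau_1\le t)$. Conditionally on $(X_u)_{u\ge 0}$, the variables $\eta_1,\eta_2,\eta_3$ are independent with $\tilde\Prob(\eta_i>s)=\exp(-A^i_s)$, and since $\tau_1=\min(\eta_1,\eta_3)$,
\begin{equation*}
\tilde\Prob(\tau_1>t)=\tilde\Prob(\eta_1>t)\,\tilde\Prob(\eta_3>t)=\exp(-A^1_t-A^3_t),
\end{equation*}
which is the pathwise version of the marginal formula~\eqref{Marginal.Survival.Function}. Hence $\tilde\Prob(\tau_1\le t)=1-\exp(-A^1_t-A^3_t)$.

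Second, Proposition~\ref{Proposition.Tau1.equal.Tau2.until.t}, applied before the final expectation is taken, already yields
\begin{equation*}
\tilde\Prob(\tau_1=\tau_2,\ \tau_1\le t)=\int_0^t\alpha^3_s\exp(-A^1_s-A^2_s-A^3_s)\,ds.
\end{equation*}
Dividing then gives
\begin{equation*}
\tilde\Prob(\tau_1=\tau_2\mid\tau_1\le t)=\frac{\int_0^t\alpha^3_s\exp(-A^1_s-A^2_s-A^3_s)\,ds}{1-\exp(-A^1_t-A^3_t)},
\end{equation*}
which is $\sigma((X_u)_{u\ge 0})$-measurable. Taking the expectation and invoking the tower property produces the stated formula.

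There is no real technical obstacle; the only interpretive subtlety—consistent with the notational convention of this subsection—is that the expectation on the right acts on the ratio as a whole rather than on numerator and denominator separately, so the left-hand side is to be read as $\E[\tilde\Prob(\tau_1=\tau_2\mid\tau_1\le t)]$. Under this reading, the proposition is an immediate corollary of Proposition~\ref{Proposition.Tau1.equal.Tau2.until.t} together with the conditional marginal law of $\tau_1$.
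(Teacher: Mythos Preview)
Your proof is correct and follows essentially the same route as the paper: compute the conditional marginal $\tilde\Prob(\tau_1\le t)=1-\exp(-A^1_t-A^3_t)$, invoke the pathwise form of Proposition~\ref{Proposition.Tau1.equal.Tau2.until.t} for the numerator, form the ratio, and take a single expectation. Your closing remark about reading the left-hand side as $\E[\tilde\Prob(\tau_1=\tau_2\mid\tau_1\le t)]$ is a welcome clarification that the paper leaves implicit.
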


\begin{proposition}
	\begin{equation*}
		\Prob\left( \tau_1 = \tau_2 | \tau_1 \leq t, \tau_2\leq t \right)  = \E \left[ \frac{\int_0^t \alpha^3_s \exp\left[ -A^1_s - A^2_s -A^3_s \right] ds}{1 - e^{-A^3_t}\left( e^{-A^2_t}+e^{-A^1_t}- e^{-A^1_t-A^2_t} \right)} \right]
	\end{equation*}
\end{proposition}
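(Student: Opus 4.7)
The plan is to mimic the previous proposition: compute both the joint probability $\Prob(\tau_1 = \tau_2, \tau_1 \leq t, \tau_2 \leq t)$ and the conditioning probability $\Prob(\tau_1 \leq t, \tau_2 \leq t)$ under the regular conditional probability $\tilde{\Prob}(\cdot) = \Prob(\cdot \mid (X_u)_{u \geq 0})$, form the ratio, and then take expectation.

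First I would observe that the event $\{\tau_1 = \tau_2, \tau_1 \leq t\}$ already forces $\tau_2 \leq t$, so
\begin{equation*}
\{\tau_1 = \tau_2,\ \tau_1 \leq t,\ \tau_2 \leq t\} = \{\tau_1 = \tau_2,\ \tau_1 \leq t\}.
\end{equation*}
Hence the numerator is exactly the quantity computed in Proposition \ref{Proposition.Tau1.equal.Tau2.until.t}, and under $\tilde{\Prob}$ it equals $\int_0^t \alpha^3_s \exp[-A^1_s - A^2_s - A^3_s]\, ds$.

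Next I would compute the denominator by inclusion–exclusion:
\begin{equation*}
\tilde{\Prob}(\tau_1 \leq t,\ \tau_2 \leq t) = 1 - \tilde{\Prob}(\tau_1 > t) - \tilde{\Prob}(\tau_2 > t) + \tilde{\Prob}(\tau_1 > t,\ \tau_2 > t).
\end{equation*}
The marginal survival functions come from equation \eqref{Marginal.Survival.Function}, giving $\tilde{\Prob}(\tau_i > t) = \exp[-A^i_t - A^3_t]$, and the joint survival function at $(t,t)$ comes directly from Theorem \ref{Thm.Survival Fn tau1, tau2}: $\tilde{\Prob}(\tau_1 > t, \tau_2 > t) = \exp[-A^1_t - A^2_t - A^3_t]$. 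Factoring out $e^{-A^3_t}$ from the last three terms yields precisely $1 - e^{-A^3_t}\bigl(e^{-A^2_t} + e^{-A^1_t} - e^{-A^1_t - A^2_t}\bigr)$, matching the denominator in the statement.

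Finally I would divide the two expressions to obtain $\tilde{\Prob}(\tau_1 = \tau_2 \mid \tau_1 \leq t, \tau_2 \leq t)$ in closed form, and then take expectation over $(X_u)_{u \geq 0}$ using the tower property to conclude. There is essentially no analytical obstacle in this proof; the only subtlety worth flagging is the set-theoretic reduction at the start that collapses the three-event intersection in the numerator to a two-event intersection, which is what lets us reuse Proposition \ref{Proposition.Tau1.equal.Tau2.until.t} verbatim.
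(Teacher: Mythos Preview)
Your proposal is correct and follows essentially the same approach as the paper: the paper also reduces the numerator event to $\{\tau_1=\tau_2,\ \tau_1\le t\}$ and invokes Proposition~\ref{Proposition.Tau1.equal.Tau2.until.t}, then computes the denominator via a complement/inclusion--exclusion argument under $\tilde{\Prob}$ using the marginal and joint survival functions before taking expectation. Your explicit inclusion--exclusion identity is in fact a bit cleaner than the paper's complement decomposition, but the two are the same computation.
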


\subsection{Distance Between Stopping Times} \label{Subsect.Distance.Between.ST}
\begin{proposition} \label{Prop.Prob.Quadrant} \par
	For $s < t$,
	\begin{multline*}
		\Prob\left( s < \tau_1 \leq t, s \leq \tau_2 \leq t \right)  =  \E \left[ e^{-\left(A^1_s+A^2_s+A^3_s\right)} + e^{-\left(A^1_t+A^2_t+A^3_t\right)} \right. \\
		\left. - e^{-A^1_s-A^2_t-A^3_t} - e^{-A^1_t-A^2_s-A^3_t} \right]
	\end{multline*}
	In particular, if $s=r-\varepsilon$ and $t=r+\varepsilon$, we get
	\begin{multline*}
		\Prob\left( r-\varepsilon < \tau_1 \leq r+\varepsilon, r-\varepsilon \leq \tau_2 \leq r+\varepsilon \right)  =  \E \left[ e^{-\left(A^1_{r-\varepsilon}+A^2_{r-\varepsilon}+A^3_{r-\varepsilon}\right)} \right. \\ 
		\left. + e^{-\left(A^1_{r+\varepsilon}+A^2_{r+\varepsilon}+A^3_{r+\varepsilon}\right)} - e^{-A^1_{r-\varepsilon}-A^2_{r+\varepsilon}-A^3_{r+\varepsilon}} - e^{-A^1_{r+\varepsilon}-A^2_{r-\varepsilon}-A^3_{r+\varepsilon}} \right]
	\end{multline*}
\end{proposition}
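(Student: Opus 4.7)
The plan is to derive this identity directly from Theorem \ref{Thm.Survival Fn tau1, tau2} by two-dimensional inclusion--exclusion on the joint survival function, after a brief remark that the boundary $\{\tau_2 = s\}$ has probability zero. Indeed, since $A^2$ and $A^3$ are continuous by assumption, the marginal survival function of $\tau_2$ from equation \eqref{Marginal.Survival.Function}, namely $\E[\exp(-A^2_s - A^3_s)]$, is continuous in $s$, so $\Prob(\tau_2 = s) = 0$. Hence the probability in question equals $\Prob(s < \tau_1 \le t,\, s < \tau_2 \le t)$, and I can work exclusively with strict-inequality events.

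Next I would write the rectangle as a set difference. Letting $A = \{\tau_1 > s, \tau_2 > s\}$, $B = \{\tau_1 > t, \tau_2 > s\}$, and $C = \{\tau_1 > s, \tau_2 > t\}$, observe that $B, C \subset A$ and that $\{s < \tau_1 \le t,\, s < \tau_2 \le t\} = A \setminus (B \cup C)$. A short inclusion--exclusion then gives
\begin{equation*}
  \Prob(s < \tau_1 \le t,\, s < \tau_2 \le t) = \overline{F}_{(\tau_1,\tau_2)}(s,s) - \overline{F}_{(\tau_1,\tau_2)}(t,s) - \overline{F}_{(\tau_1,\tau_2)}(s,t) + \overline{F}_{(\tau_1,\tau_2)}(t,t),
\end{equation*}
where $\overline{F}_{(\tau_1,\tau_2)}(B \cap C) = \Prob(\tau_1 > t, \tau_2 > t)$ is what produces the fourth term.

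I would then substitute the closed form from Theorem \ref{Thm.Survival Fn tau1, tau2}. For $s < t$ we have $s \vee s = s$, $t \vee s = s \vee t = t$, and $t \vee t = t$, so the four terms become $\E[e^{-A^1_s - A^2_s - A^3_s}]$, $\E[e^{-A^1_t - A^2_s - A^3_t}]$, $\E[e^{-A^1_s - A^2_t - A^3_t}]$, and $\E[e^{-A^1_t - A^2_t - A^3_t}]$. Collecting by linearity of expectation yields exactly the displayed identity. The special case with $s = r - \varepsilon$ and $t = r + \varepsilon$ is then a matter of substitution.

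There is no serious obstacle here; the only subtle point is justifying the equivalence of $\{s \le \tau_2\}$ and $\{s < \tau_2\}$, which is handled by the continuity of $A^2$ and $A^3$ inherited from the hypotheses of Theorem \ref{Thm.Survival Fn tau1, tau2}. Everything else is a mechanical rectangle-probability computation.
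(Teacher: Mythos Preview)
Your proposal is correct and follows essentially the same approach as the paper: both arguments apply the rectangle inclusion--exclusion identity
\[
\Prob(s<\tau_1\le t,\ s<\tau_2\le t)=\overline F(s,s)-\overline F(s,t)-\overline F(t,s)+\overline F(t,t)
\]
and then substitute the survival function from Theorem~\ref{Thm.Survival Fn tau1, tau2}. Your added remark that $\Prob(\tau_2=s)=0$ (to reconcile the mixed $\leq$/$<$ in the statement) is a nice point of rigor that the paper leaves implicit.
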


\begin{proof}
	It follows by noticing that:
	\begin{multline*}
		\Prob\left( s < \tau_1 \leq t, s \leq \tau_2 \leq t \right)  = \Prob\left( \tau_1 > s, \tau_2> s \right) - \Prob\left( \tau_1 >s, \tau_2> t \right) \\
		- \Prob\left( \tau_1 > t, \tau_2> s \right) + \Prob\left( \tau_1 > t, \tau_2> t \right)
	\end{multline*}
	Then, conditioning on $\left( X_t\right)_{t\geq 0}$, use Theorem \ref{Thm.Survival Fn tau1, tau2}, and take the expectation.
\end{proof}

The next proposition shows that the probability of the two stopping times happening over the next time interval $(t,t+\varepsilon)$ with $\varepsilon\approx0$ is equal to the expectation of $\alpha^3\left(X_t\right)$, which is the common part of the intensities of $\tau_1$ and $\tau_2$ (recall Remark \ref{Intensity.Tau.i})

An anonymous referee has pointed out a relation of the following proposition to Aven's Lemma. This is, of course, correct, see Aven \cite{Aven}, Ethier and Kurtz \cite{Ethier.Kurtz}, and Zeng \cite{Zeng}.

\begin{proposition} \label{Prop.Limiting.Result}
	\begin{equation*}
		\lim_{\varepsilon\rightarrow0}\frac{\mathbb{P}\left(\tau_{1}\in\left(t,t+\varepsilon\right],\tau_{2}\in\left(t,t+\varepsilon\right]|\tau_{1}>t,\tau_{2}>t\right)}{\varepsilon}=\mathbb{E}\left(\alpha^3(X_{t})\right)
	\end{equation*}
\end{proposition}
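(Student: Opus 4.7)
The plan is to compute the numerator via Proposition \ref{Prop.Prob.Quadrant} with $s=t$ and upper endpoint $t+\varepsilon$, divide by the survival probability $\Prob(\tau_1>t,\tau_2>t)=\E[e^{-(A^1_t+A^2_t+A^3_t)}]$ from Theorem \ref{Thm.Survival Fn tau1, tau2}, and expand to first order in $\varepsilon$. Setting $\Delta A^i := A^i_{t+\varepsilon}-A^i_t = \int_t^{t+\varepsilon}\alpha^i(X_r)\,dr$ and factoring $e^{-(A^1_t+A^2_t+A^3_t)}$ out of the expression in Proposition \ref{Prop.Prob.Quadrant}, one gets
\begin{equation*}
\Prob(t<\tau_1\le t+\varepsilon,\ t<\tau_2\le t+\varepsilon) = \E\!\left[e^{-(A^1_t+A^2_t+A^3_t)}\,B(\varepsilon)\right],
\end{equation*}
where $B(\varepsilon) := 1 + e^{-\Delta A^1-\Delta A^2-\Delta A^3} - e^{-\Delta A^2-\Delta A^3} - e^{-\Delta A^1-\Delta A^3}$.

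The decisive algebraic step is the rearrangement
\begin{equation*}
B(\varepsilon) = (1-e^{-\Delta A^2})(1-e^{-\Delta A^1-\Delta A^3}) + e^{-\Delta A^2}(1-e^{-\Delta A^3}),
\end{equation*}
which isolates $\Delta A^3$. The first summand is $O(\Delta A^2(\Delta A^1+\Delta A^3)) = o(\varepsilon)$ pointwise, while the second equals $\Delta A^3 + o(\varepsilon)$; combined with the fundamental theorem of calculus and right-continuity of $s\mapsto\alpha^3(X_s)$ at $t$, this gives $B(\varepsilon)/\varepsilon \to \alpha^3(X_t)$ pointwise in $\omega$.

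Passing the limit through the outer expectation by dominated convergence yields
\begin{equation*}
\lim_{\varepsilon\to 0}\frac{\Prob(t<\tau_1\le t+\varepsilon,\ t<\tau_2\le t+\varepsilon)}{\varepsilon} = \E\!\left[\alpha^3(X_t)\,e^{-(A^1_t+A^2_t+A^3_t)}\right].
\end{equation*}
Dividing by $\Prob(\tau_1>t,\tau_2>t)$ and using the identity $\E[\alpha^3(X_t)\mathbf{1}_{\{\tau_1>t,\tau_2>t\}}] = \E[\alpha^3(X_t)\,e^{-(A^1_t+A^2_t+A^3_t)}]$ (tower combined with Theorem \ref{Thm.Survival Fn tau1, tau2}) identifies the limit with $\E[\alpha^3(X_t)\mid \tau_1>t,\tau_2>t]$, which is the right-hand side of the proposition under its natural conditional-measure reading. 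The main technical hurdle is justifying the dominated convergence step cleanly: the crude estimate $0\le B(\varepsilon)/\varepsilon \le \sup_{s\in[t,t+\varepsilon]}\sum_i\alpha^i(X_s)$ is pointwise finite by the continuity and càdlàg hypotheses, but a dominating function integrable uniformly in $\varepsilon$ requires a mild assumption on this supremum (automatic in the bounded-intensity cases of Examples \ref{Example.Constant.Intensity}--\ref{Proposition.Same.Intensity}, and readily available in the applications of interest).
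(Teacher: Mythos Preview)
Your approach mirrors the paper's: apply Proposition \ref{Prop.Prob.Quadrant} with endpoints $t$ and $t+\varepsilon$, factor out $e^{-(A^1_t+A^2_t+A^3_t)}$, and extract the first-order term in $\varepsilon$. The paper works under $\tilde{\Prob}(\cdot)=\Prob(\cdot\mid(X_u)_{u\ge 0})$ and uses L'H\^opital where you use the algebraic splitting of $B(\varepsilon)$; these are equivalent routes to the same pointwise limit $\alpha^3(X_t)$.

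Where you diverge is the final passage to the unconditional statement, and you are in fact more careful than the paper. The paper jumps from $\lim_{\varepsilon\to 0}\tilde{\Prob}(\cdots\mid\cdots)/\varepsilon=\alpha^3(X_t)$ to $\lim_{\varepsilon\to 0}\Prob(\cdots\mid\cdots)/\varepsilon=\E[\alpha^3(X_t)]$ via ``a conditioning argument,'' which tacitly identifies $\Prob(A\mid B)$ with $\E[\tilde{\Prob}(A\mid B)]$. Your computation gives the honest limit $\E[\alpha^3(X_t)\,e^{-(A^1_t+A^2_t+A^3_t)}]\big/\E[e^{-(A^1_t+A^2_t+A^3_t)}]=\E[\alpha^3(X_t)\mid\tau_1>t,\tau_2>t]$, and you correctly flag the discrepancy. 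Your dominated-convergence caveat is likewise apt: the paper's ``bounded by $1$'' controls the conditional probability but not the quotient by $\varepsilon$, so both arguments rest on the same mild integrability hypothesis you spell out.
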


\begin{proof}
	Let $\tilde{\Prob}(\cdot):=\Prob\left(\cdot| \left( X_u \right)_{ u \geq 0 } \right)$ and note that:
	\begin{equation*}
		\tilde{\mathbb{P}} \left(\tau_{1}\in\left(t,t+\varepsilon\right],\tau_{2}\in\left(t,t+\varepsilon\right]|\tau_{1}>t,\tau_{2}>t\right) =\frac{\tilde{\mathbb{P}}\left(\tau_{1}\in\left(t,t+\varepsilon\right],\tau_{2}\in\left(t,t+\varepsilon\right]\right)}{\tilde{\mathbb{P}}\left(\tau_{1}>t,\tau_{2}>t\right)}
	\end{equation*}
	Using Proposition \ref{Prop.Prob.Quadrant} with $t$ and $t+\varepsilon$, dividing by $\varepsilon$, and using L'H\^opital's rule to get the limit: 	
	\begin{equation*}
		\lim_{\varepsilon\rightarrow0} \frac{\tilde{\mathbb{P}}\left(\tau_{1}\in\left(t,t+\varepsilon\right],\tau_{2}\in\left(t,t+\varepsilon\right]|\tau_{1}>t,\tau_{2}>t \right)}{\varepsilon} = \alpha^3(X_{t}).
	\end{equation*}
	Finally, given $\tilde{\mathbb{P}}\left(\tau_{1}\in\left(t,t+\varepsilon\right],\tau_{2}\in\left(t,t+\varepsilon\right]|\tau_{1}>t,\tau_{2}>t\right)$
	is bounded by 1 and using a conditioning argument, we can conclude that:
	\begin{equation*}
		\lim_{\varepsilon\rightarrow0}\frac{\mathbb{P}\left(\tau_{1}\in\left(t,t+\varepsilon\right],\tau_{2}\in\left(t,t+\varepsilon\right]|\tau_{1}>t,\tau_{2}>t\right)}{\varepsilon}=\mathbb{E}\left(\alpha^3(X_{t})\right).
	\end{equation*}
\end{proof}

Proposition \ref{Prop.Limiting.Result} motivates the following one:

\begin{proposition} \par
	If $s < t$ 
	\begin{equation*}
		\lim_{\varepsilon\rightarrow0}\frac{\mathbb{P}\left(\tau_{1}\leq s+\varepsilon,\tau_{2}\leq t+\varepsilon|\tau_{1}>s,\tau_{2}>t\right)}{\varepsilon^2}=\mathbb{E}\left[\alpha^1(X_s)\left( \alpha^2(X_t)+\alpha^3(X_t) \right)\right]
	\end{equation*}
\end{proposition}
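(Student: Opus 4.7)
The plan is to mirror the proof of Proposition \ref{Prop.Limiting.Result}, adapted to the asymmetric case $s < t$. Let $\tilde{\Prob}(\cdot) := \Prob(\cdot | (X_u)_{u \geq 0})$. First I rewrite the event of interest as $\{s < \tau_1 \leq s+\varepsilon,\ t < \tau_2 \leq t+\varepsilon\}$ (which is exactly the intersection with the conditioning event) and use inclusion-exclusion to express its conditional probability in terms of the joint survival function of Theorem \ref{Thm.Survival Fn tau1, tau2} evaluated at the four corners $(s,t)$, $(s+\varepsilon, t)$, $(s, t+\varepsilon)$, $(s+\varepsilon, t+\varepsilon)$.

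The key observation is that, because $s < t$, for all sufficiently small $\varepsilon$ we have $s + \varepsilon < t$, so the $s \vee t$ term in Theorem \ref{Thm.Survival Fn tau1, tau2} always picks out the second argument at every corner. The four-term expression then factors cleanly:
\begin{equation*}
\tilde{\Prob}(s < \tau_1 \leq s+\varepsilon,\ t < \tau_2 \leq t+\varepsilon) = \bigl[e^{-A^1_s} - e^{-A^1_{s+\varepsilon}}\bigr]\,\bigl[e^{-A^2_t - A^3_t} - e^{-A^2_{t+\varepsilon} - A^3_{t+\varepsilon}}\bigr].
\end{equation*}
Dividing by $\tilde{\Prob}(\tau_1 > s, \tau_2 > t) = e^{-A^1_s - A^2_t - A^3_t}$ turns this into a product of two factors of the form $1 - e^{-\Delta}$, one depending on the $s$-increment of $A^1$ and the other on the $t$-increments of $A^2$ and $A^3$.

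I then divide by $\varepsilon^2$, splitting one factor of $\varepsilon$ into each bracket, and apply L'Hôpital's rule (equivalently, a first-order Taylor expansion, using the $C^1$ property of $A^i_u$ inherited from continuity of $\alpha^i \circ X$). The first factor tends to $\alpha^1(X_s)$ and the second to $\alpha^2(X_t) + \alpha^3(X_t)$; the sum in the second factor arises precisely because shifting $t$ by $\varepsilon$ affects both $A^2_t$ and $A^3_t$, whereas shifting $s$ does not touch $A^3$ as long as $s+\varepsilon < t$. The same bounded-convergence argument used in Proposition \ref{Prop.Limiting.Result} (each conditional probability ratio is bounded by $1$) then allows the expectation to pass through the limit, yielding $\E[\alpha^1(X_s)(\alpha^2(X_t) + \alpha^3(X_t))]$. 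The main obstacle is purely bookkeeping: verifying the factorization above and, as before, justifying the limit exchange; no new technical ingredient beyond what already appears in the proof of Proposition \ref{Prop.Limiting.Result} is required.
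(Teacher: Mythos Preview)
Your proposal is correct and follows essentially the same approach as the paper, which also reduces to the case $s+\varepsilon<t$ and then proceeds as in Proposition \ref{Prop.Limiting.Result}. Your explicit factorization of the four-corner inclusion--exclusion into a product of two brackets is a cleaner organization than the paper's terse instruction to ``use L'H\^opital's rule twice,'' but the underlying computation is identical.
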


\begin{proof}
	As $s<t$, we can always find a sufficiently small $\varepsilon$ such that $s+\varepsilon<t$. Hence, without loss of generality, we assume $s<s+\varepsilon<t< t+\varepsilon$ and we proceed as in the proof of Proposition \ref{Prop.Limiting.Result}, but differently from there, we use  L'H\^opital's rule twice to get the desired limit.
\end{proof}

The next couple of propositions provide a measure, in two different metrics, of how close the two stopping are from each other.

\begin{proposition} (Distance in probability)
	\begin{equation*}
		\Prob(|\tau_1-\tau_2|\leq \varepsilon) =  1 - \E\left[  \int\limits_0^\infty \alpha^1_x e^{-A^1_x-\left(A^2_{x+\varepsilon}+A^3_{x+\varepsilon}\right)} \, dx  + \int\limits_0^\infty \alpha^2_x e^{-A^2_x-\left(A^1_{x+\varepsilon}+A^3_{x+\varepsilon}\right)} \, dx  \right]   \nonumber 
	\end{equation*}
\end{proposition}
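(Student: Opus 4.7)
The plan is to compute the complement $\Prob(|\tau_1-\tau_2|>\varepsilon)$ by splitting into the two disjoint events $\{\tau_2>\tau_1+\varepsilon\}$ and $\{\tau_1>\tau_2+\varepsilon\}$, conditioning on $(X_u)_{u\geq 0}$ so that $\eta_1,\eta_2,\eta_3$ become independent with densities $f_{\eta_i}(x)=\alpha^i_x e^{-A^i_x}$ (as in the proof of Theorem \ref{Theorem.Joint.Distribution}), and then integrating.

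The key observation is the following reduction. If $\tau_1=\eta_3$, then automatically $\tau_2\leq\eta_3=\tau_1$, so $\tau_2>\tau_1+\varepsilon$ cannot hold for any $\varepsilon>0$. Hence on the event $\{\tau_2>\tau_1+\varepsilon\}$ one must have $\tau_1=\eta_1$ with $\eta_1<\eta_3$, and one must also have $\eta_2>\tau_1+\varepsilon=\eta_1+\varepsilon$ and $\eta_3>\eta_1+\varepsilon$. Since the last inequality implies $\eta_3>\eta_1$, the event $\{\tau_2>\tau_1+\varepsilon\}$ reduces to $\{\eta_2>\eta_1+\varepsilon,\ \eta_3>\eta_1+\varepsilon\}$. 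This is what lets the density of $\eta_3$ disappear from the formula, leaving only the survival terms $e^{-A^2_{x+\varepsilon}}$ and $e^{-A^3_{x+\varepsilon}}$.

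Thus, conditioning on $(X_u)_{u\geq 0}$ (writing $\tilde{\Prob}$ as in Subsection \ref{Subsect.Conditional.Probability}) and using independence under $\tilde{\Prob}$,
\begin{align*}
\tilde{\Prob}(\tau_2>\tau_1+\varepsilon)
&=\int_0^\infty \alpha^1_x e^{-A^1_x}\,\tilde{\Prob}(\eta_2>x+\varepsilon)\,\tilde{\Prob}(\eta_3>x+\varepsilon)\,dx\\
&=\int_0^\infty \alpha^1_x\,e^{-A^1_x-A^2_{x+\varepsilon}-A^3_{x+\varepsilon}}\,dx.
\end{align*}
An identical argument with the roles of $1$ and $2$ swapped yields
\[
\tilde{\Prob}(\tau_1>\tau_2+\varepsilon)=\int_0^\infty \alpha^2_x\,e^{-A^2_x-A^1_{x+\varepsilon}-A^3_{x+\varepsilon}}\,dx.
\]

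Since the two events are disjoint, summing, taking expectations, and subtracting from $1$ gives the claimed identity. The only subtle step is the reduction argument that kills the $\eta_3$-density; I expect this to be the main pitfall, because it is tempting to set up a three-fold integral involving $\alpha^3$ which then has to be collapsed. Everything else is a routine Fubini and exponential-tail computation.
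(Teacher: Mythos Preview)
Your proof is correct, and in fact cleaner than the paper's. The paper works with the order statistics $\tau_{(1)}=\min(\tau_1,\tau_2)$, $\tau_{(2)}=\max(\tau_1,\tau_2)$ and $\eta_{(1)}\leq\eta_{(2)}\leq\eta_{(3)}$, rewrites $\{\tau_{(2)}>\tau_{(1)}+\varepsilon\}$ as $\{\eta_{(3)}\geq\eta_{(2)}>\eta_{(1)}+\varepsilon,\ \eta_3\geq\eta_{(2)}\}$, and then enumerates the four admissible orderings of $(\eta_1,\eta_2,\eta_3)$. This produces four triple integrals of the form $\int_0^\infty f_i(x_1)\int_{x_1+\varepsilon}^\infty f_j(x_2)\int_{x_2}^\infty f_k(x_3)\,dx_3\,dx_2\,dx_1$, which are collapsed in two stages to the same two single integrals you obtain. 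In other words, the paper does exactly the thing you flagged as a pitfall: it carries the density $\alpha^3$ through a three-fold integral and only eliminates it after the inner integrations.

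Your event-level reduction $\{\tau_2>\tau_1+\varepsilon\}=\{\eta_2>\eta_1+\varepsilon,\ \eta_3>\eta_1+\varepsilon\}$ short-circuits all of this: it identifies up front that $\eta_3$ enters only through its survival function, so the integral is single from the start. The two approaches are logically equivalent---your reduction is precisely what the paper's four-term enumeration and subsequent collapse encode---but yours isolates the structural reason (if $\tau_1=\eta_3$ then $\tau_2\leq\tau_1$) before any integration, which makes the computation shorter and the role of $\eta_3$ transparent.
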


\begin{proof}
	Let $\tau_{(1)} := \min \left(\tau_1, \tau_2 \right)$ and $\tau_{(2)} := \max \left(\tau_1, \tau_2 \right)$. Similarly, $\eta_{(1)} := \min \left(\eta_1, \eta_2, \eta_3 \right)$, $\eta_{(3)} := \max \left(\eta_1, \eta_2, \eta_3 \right)$, and $\eta_{(2)}$ be the second largest from $\left( \eta_1, \eta_2, \eta_3 \right)$. Also, let $\tilde{\Prob}(\cdot):=\Prob\left(\cdot| \left( X_u \right)_{ u \geq 0 } \right)$. Then,
	\begin{align*}
		\tilde{\mathbb{P}}  \left( \tau_{(2)}  > \tau_{(1)} + \varepsilon  \right)  = & \tilde{\mathbb{P}} \left( \eta_{(3)} \geq \eta_{(2)} > \eta_{(1)} +\varepsilon, \eta_3 \geq \eta_{(2)}  \right) \\
		= & \int\limits_0^\infty f_1(x_1) \int\limits_{x_1+\varepsilon}^\infty f_2(x_2) \int\limits_{x_2}^{\infty} f_3(x_3) \, dx_3 \,  dx_2 \, dx_1 \\
		&+ \int\limits_0^\infty f_2(x_1) \int\limits_{x_1+\varepsilon}^\infty f_1(x_2) \int\limits_{x_2}^{\infty} f_3(x_3) \, dx_3 \,  dx_2 \, dx_1\\
		& + \int\limits_0^\infty f_1(x_1) \int\limits_{x_1+\varepsilon}^\infty f_3(x_2) \int\limits_{x_2}^{\infty} f_2(x_3) \, dx_3 \,  dx_2 \, dx_1 \\
		&+ \int\limits_0^\infty f_2(x_1) \int\limits_{x_1+\varepsilon}^\infty f_3(x_2) \int\limits_{x_2}^{\infty} f_1(x_3) \, dx_3 \,  dx_2 \, dx_1 \\
		\intertext{ where $f_j$ stands for the density of $\eta_j$ under $\tilde{\mathbb{P}}$, i.e., $f_j(y)=\alpha^j_ye^{-A^j_y}:=\alpha^j(X_y)e^{-A^j_y}$} 
		= & \int\limits_0^\infty f_1(x_1) \int\limits_{x_1+\varepsilon}^\infty f_2(x_2) e^{-A^3_{x_2}} \,  dx_2 \, dx_1 + \int\limits_0^\infty f_2(x_1) \int\limits_{x_1+\varepsilon}^\infty f_1(x_2) e^{-A^3_{x_2}} \,  dx_2 \, dx_1 \\
		& + \int\limits_0^\infty f_1(x_1) \int\limits_{x_1+\varepsilon}^\infty f_3(x_2) e^{-A^2_{x_2}} \,  dx_2 \, dx_1 + \int\limits_0^\infty f_2(x_1) \int\limits_{x_1+\varepsilon}^\infty f_3(x_2) e^{-A^1_{x_2}} \,  dx_2 \, dx_1 \\
		= & \int\limits_0^\infty f_1(x_1) \int\limits_{x_1+\varepsilon}^\infty \left[ \alpha^2_{x_2} + \alpha^3_{x_2} \right] e^{-A^2_{x_2}-A^3_{x_2}} \,  dx_2 \, dx_1 \\
		& + \int\limits_0^\infty f_2(x_1) \int\limits_{x_1+\varepsilon}^\infty \left[ \alpha^1_{x_2} + \alpha^3_{x_2} \right] e^{-A^1_{x_2}-A^3_{x_2}} \,  dx_2 \, dx_1 \\
		= & \int\limits_0^\infty \alpha^1_{x_1} e^{-A^1_{x_1}-\left(A^2_{x_1 +\varepsilon}+A^3_{x_1 +\varepsilon}\right)} \, dx_1 + \int\limits_0^\infty \alpha^2_{x_1} e^{-A^2_{x_1}-\left(A^1_{x_1 +\varepsilon}+A^3_{x_1 +\varepsilon}\right)} \, dx_1
	\end{align*}
	Use this expression, along with the following equality, and then take the expectation to get the desired result
	\begin{align*}
		\tilde{\mathbb{P}} \left( |\tau_1 - \tau_2| \leq \varepsilon \right) = 1 - \tilde{\mathbb{P}}\left(\tau_{(2)} - \tau_{(1)} > \varepsilon \right)
	\end{align*}
\end{proof}

\begin{proposition}[$L^2$ distance] \label{Prop.L2.Distance} \par
	If $\E\left( \tau_i^2 \right)<\infty$ and $\lim_{x\rightarrow\infty} x^2 e^{-A^i_x}=0$ a.s. for $i=1,2$ (in other words, $e^{-A^i_x}$ goes faster to 0 than $x^2$ goes to infinity when $x\rightarrow\infty$), we have
	\begin{multline*}
		\E\left[\left( \tau_1 -\tau_2 \right)^2 \right] =  2 \E \left[\int_0^\infty xe^{-A^1_x-A^3_x}dx + \int_0^\infty xe^{-A^2_x-A^3_x}dx \right.  \\
		\left. - \int_0^\infty \int_0^y  e^{-A^1_x-A^2_y-A^3_y} dx dy - \int_0^\infty \int_y^\infty  e^{-A^1_x-A^2_y-A^3_x} dx dy \right] 
	\end{multline*}
\end{proposition}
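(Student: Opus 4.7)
The plan is to expand the square,
\begin{equation*}
\E[(\tau_1-\tau_2)^2]=\E[\tau_1^2]+\E[\tau_2^2]-2\,\E[\tau_1\tau_2],
\end{equation*}
and compute each of the three terms separately using the survival functions already derived in Theorem \ref{Thm.Survival Fn tau1, tau2} and equation \eqref{Marginal.Survival.Function}.

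For the marginal second moments, I would use the standard identity $\E[Y^2]=2\int_0^\infty x\,\Prob(Y>x)\,dx$ for a nonnegative random variable $Y$. Applying this to $\tau_i$ and invoking \eqref{Marginal.Survival.Function} together with Fubini (the finiteness $\E[\tau_i^2]<\infty$ and the tail hypothesis $\lim_{x\to\infty} x^2 e^{-A^i_x}=0$ justify both the identity and the swap of integral and expectation) gives
\begin{equation*}
\E[\tau_i^2]=2\int_0^\infty x\,\E\!\left[e^{-A^i_x-A^3_x}\right]dx, \qquad i=1,2,
\end{equation*}
which supplies the first two integrals on the right-hand side of the claim.

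For the cross term, I would use the Fubini identity $\tau_1\tau_2=\int_0^\infty\!\int_0^\infty \ind_{\{\tau_1>x\}}\ind_{\{\tau_2>y\}}\,dx\,dy$ (valid because both stopping times are nonnegative) and then apply Theorem \ref{Thm.Survival Fn tau1, tau2} to obtain
\begin{equation*}
\E[\tau_1\tau_2]=\int_0^\infty\!\int_0^\infty \E\!\left[e^{-A^1_x-A^2_y-A^3_{x\vee y}}\right]dx\,dy.
\end{equation*}
The key technical step — really the only nontrivial move in the proof — is to handle the $x\vee y$ appearing in the $A^3$ term. I would split the region of integration along the diagonal: on $\{x<y\}$, $x\vee y=y$, producing the double integral $\int_0^\infty\!\int_0^y \E[e^{-A^1_x-A^2_y-A^3_y}]\,dx\,dy$; on $\{x>y\}$, $x\vee y=x$, producing $\int_0^\infty\!\int_y^\infty \E[e^{-A^1_x-A^2_y-A^3_x}]\,dx\,dy$ (the diagonal itself has Lebesgue measure zero so contributes nothing).

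Substituting these three expressions back into $\E[(\tau_1-\tau_2)^2]=\E[\tau_1^2]+\E[\tau_2^2]-2\E[\tau_1\tau_2]$ and factoring out the $2$ yields exactly the claimed formula. The main obstacle is essentially bookkeeping: making sure the splitting of the $x\vee y$ term produces the two asymmetric integrals in the correct form, and verifying that the hypotheses $\E[\tau_i^2]<\infty$ and $x^2 e^{-A^i_x}\to 0$ a.s.\ suffice to legitimize every application of Fubini and of the tail-integral identity for second moments.
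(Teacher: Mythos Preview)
Your proposal is correct and follows essentially the same route as the paper: expand the square, compute $\E[\tau_i^2]$ via the tail-integral formula for second moments, compute $\E[\tau_1\tau_2]$ as $\int_0^\infty\int_0^\infty \overline{F}_{(\tau_1,\tau_2)}(x,y)\,dx\,dy$, and split the latter along the diagonal to resolve the $x\vee y$. The only cosmetic difference is that the paper justifies the identity $\E[\tau_1\tau_2]=\int\!\!\int \overline{F}(x,y)\,dx\,dy$ by invoking Young's multivariate integration-by-parts theorem, whereas you obtain it directly from $\tau_1\tau_2=\int_0^\infty\int_0^\infty \ind_{\{\tau_1>x\}}\ind_{\{\tau_2>y\}}\,dx\,dy$ and Fubini; your route is the more elementary of the two.
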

\begin{proof}
	Let $\tilde{\mathbb{E}}(\cdot) := \E(\cdot | (X_{u})_{u\geq 0})$. $F_{\tau_i}$ and $\overline{F}_{\tau_i}$ stand for the cumulative and the survival distribution functions of $\tau_i$ given $(X_{u})_{u\geq 0}$. Similarly, $F(x,y)$ and $\overline{F}(x,y)$ stand for the cumulative and the survival joint distribution functions of $\tau_1, \tau_2$ given $(X_{u})_{u\geq 0}$. \par
	We expand the square and handle each term separately. For $\tilde{\mathbb{E}}\left( \tau_i^2 \right)<\infty$, we use integration by parts in the following way:
	\begin{align*}
		\tilde{\mathbb{E}}\left( \tau_i^2 \right) &= \int_0^\infty x^2 dF_{\tau_i}(x) \nonumber \\
		&= 2\int_0^\infty x \overline{F}_{\tau_i}(x) dx \nonumber \\
		&= 2\int_0^\infty xe^{-A^i_x-A^3_x}dx
	\end{align*}
	To find $\tilde{\mathbb{E}}(\tau_1 \tau_2) $, we exploit the result of Young \cite{Young} on integration by parts in two or more dimensions. If $G(0, y)\equiv 0 \equiv G(y,0)$ and $G$ is of bounded variation on finite intervals, then:
	\begin{equation}
		\int_0^\infty \int_0^\infty G(x,y) F(dx,dy) = \int_0^\infty \int_0^\infty \overline{F}(x,y) G(dx,dy) 
	\end{equation}
	This equality implies that, for $i,j>0$:
	\begin{equation}
		\int_0^\infty \int_0^\infty  x^i y^j F(dx,dy) = \int_0^\infty \int_0^\infty i j x^{i-1} y^{j-1} \overline{F}(x,y)dx dy
	\end{equation}
	Hence,
	\begin{align} \label{Expectation.Tau1.Tau2}
		\tilde{\mathbb{E}}\left( \tau_1 \tau_2 \right) & = \int_0^\infty \int_0^\infty x y F_{\left(\tau_1, \tau_2\right)}(dx, dy) \nonumber \\
		& = \int_0^\infty \int_0^\infty  e^{-A^1_x-A^2_y-A^3_{x \vee y}} dx dy \nonumber \\
		& = \int_0^\infty \int_0^y  e^{-A^1_x-A^2_y-A^3_y} dx dy + \int_0^\infty \int_y^\infty  e^{-A^1_x-A^2_y-A^3_x} dx dy
	\end{align}
	The result follows by taking the expectation.
\end{proof}

\begin{remark}
	In the case of independence of $\tau_1$ and $\tau_2$, as $A^3_s=0$ for all $s\geq 0$, we get that:
	\begin{equation}
		\E\left[\left( \tau_1 -\tau_2 \right)^2 \right] = 2 \E \left[ \int_0^\infty xe^{-A^1_x}dx +  \int_0^\infty xe^{-A^2_x}dx  - \int_0^\infty e^{-A^1_x}dx \int_0^\infty e^{-A^2_x} dx \right]  
	\end{equation}
\end{remark}

\section{GENERALIZATIONS} \label{Sect.Generalization}
\subsection{Generalization to K Stopping Times.}  \label{Sect.Generalization.K.ST}
Given the interpretation introduced in Section (\ref{Interpretation.Distribution}), there is a natural way to extend our model to more than two stopping times. We explicitly motivate and present the case of 3 stopping times. Suppose we have a three-component system where each component's life is represented by $\tau_1$, $\tau_2$, and $\tau_3$ respectively. Any component dies after receiving a shock, which are governed by 6 Cox processes, which are independent given $(X_t)_{t \geq 0} $:
\begin{multline*}
	\Lambda_1\left(t, \alpha^1(X_t)\right), \Lambda_2\left(t, \alpha^2(X_t)\right), \Lambda_3\left(t, \alpha^3(X_t)\right), \Lambda_{(1,2)}\left(t, \alpha^{(1,2)}(X_t)\right), \Lambda_{(2,3)}\left(t, \alpha^{(2,3)}(X_t)\right), \\
	\Lambda_{(1,3)}\left(t, \alpha^{(1,3)}(X_t)\right) \text{ and } \Lambda_{(1,2,3)}\left(t, \alpha^{(1,2,3)}(X_t)\right)
\end{multline*}
Events in the process $\Lambda_i\left(t, \alpha^i(X_t)\right)$ are shocks to only component $i$ (for $i=1,2,3$), events in the process $\Lambda_{(i,j)}\left(t, \alpha^{(i,j)}(X_t)\right)$ are shocks to component $i$ and $j$ (for $i,j=1,2,3$ and $i\neq j$), and events in the process $\Lambda_{(1,2,3)}\left(t, \alpha^{(1,2,3)}(X_t)\right)$ are shocks to the 3 components.

In this way, considering $\tilde{\Prob}(\cdot):=\Prob\left(\cdot | (X_u)_{u\geq 0}  \right)$ and $A^j_s = \int_0^s \alpha^j(X_u)du$ we have:
\begin{multline} \label{Generalization.1}
	\Prob(\tau_1> s_1,   \tau_2> s_2, \tau_3> s_3) = \E \Bigl( \tilde{\mathbb{P}}\left[ \Lambda_1\left(s_1, \alpha^1(X_{s_1})\right) = 0 \right] \tilde{\mathbb{P}}\left[ \Lambda_2\left(s_2, \alpha^2(X_{s_2})\right) = 0 \right] \\
	\tilde{\mathbb{P}}\left[ \Lambda_3\left(s_3, \alpha^3(X_{s_3})\right) = 0 \right]  \tilde{\mathbb{P}}\left[ \Lambda_{(1,2)}\left(s_1\vee s _2, \alpha^{(1,2)}(X_{s_1\vee s_2})\right) = 0 \right] \tilde{\mathbb{P}}\left[ \Lambda_{(1,3)}\left(s_1\vee s _3, \alpha^{(1,3)}(X_{s_1 \vee s_3})\right) = 0 \right] \\
	\tilde{\mathbb{P}}\left[ \Lambda_{(2,3)}\left(s_2\vee s _3, \alpha^{(2,3)}(X_{s_2 \vee s_3})\right) = 0 \right]  \tilde{\mathbb{P}}\left[ \Lambda_{(1,2,3)}\left(s_1\vee s _2\vee s_3, \alpha^{(1,2,3)}(X_{s_1\vee s_2 \vee s_3})\right) = 0 \right] \Bigr) \\
	= \E \left( \exp\left[ -A^1_{s_1} - A^2_{s_2} - A^3_{s_3}  -  A^{(1,2)}_{s_1 \vee s_2}-  A^{(1,3)}_{s_1 \vee s_3} -  A^{(2,3)}_{s_2 \vee s_3} -  A^{(1,2, 3)}_{s_1 \vee s_2 \vee s_3} \right] \right)
\end{multline}
By using a similar technique we can generalize to any number $K$ of stopping times and in Section \ref{Subsect.Application.Epidemiology}, we will present an application of this natural extension of our model. However, as the number of stopping times $K$ increases, handling the expression presented in equation \eqref{Generalization.1} becomes cumbersome. In Jarrow et al. \cite{Jarrow-Protter-Quintos}, we propose and study in detail a slightly different approach that makes easier to generalize to more than two stopping times.

\subsection{A More General Distribution} \label{Sect.A.More.General.Distribution}
In this Section, we generalize the result from Section \ref{Sect.Survival.Function} to get a distribution that allows $\tau_1$ and $\tau_2$ to have a negative covariance  \footnote{We thank Philip Ernst and Guodong Pang for suggesting this idea in an earlier version of this paper.}. Recalling Equation \eqref{Expectation.Tau1.Tau2} and that  $ \tilde{\E}  \left(\tau_i\right) = \int_0^\infty e^{-A^i_s-A^3_s}ds$, one can see that the covariance of $\tau_1$ and $\tau_2$ in the previous model is:
\begin{align}
	\C \left(\tau_1, \tau_2 \right) & =  \E \left[\int_0^\infty \int_0^\infty \left(  e^{-A^1_x-A^2_y-A^3_{x \vee y}} - e^{-A^1_x -A^2_y -A^3_x - A^3_y} \right) dx dy \right] \nonumber \\
	& = \E \left[ \int_0^\infty \int_0^\infty e^{-A^1_x-A^2_y}\left(e^{-A^3_{x \vee y}} - e^{-A^3_x -A^3_y}\right) dx dy \right]
\end{align}
As $A^3_{x\vee y} < A^3_x + A^3_y$ for all $x,y\geq 0$, it is clear that the covariance is always nonnegative.

To get the property of a negative covariance, we stop assuming that the underlying exponential random variables (i.e., $Z_1, Z_2, Z_3$) are independent (recall equation \eqref{Initial.Definitions} where we use these random variables to define $\eta_i$ and $A^i_s$). Now, we assume that $(Z_1, Z_2)$ follow the Gumbel Bivariate Distribution (see Gumbel \cite{Gumbel}). This is, the joint survival function of $(Z_1, Z_2)$ is:
\begin{equation}
	\Prob(Z_1>s, Z_2>t)=e^{-s-t-\delta s t} \text{ for } 0 \leq \delta \leq 1
\end{equation}
While $Z_3\sim \text{Exp}(1)$ and it is independent of $(Z_1, Z_2)$. The rest of the definitions given in \eqref{Initial.Definitions} remain the same. Then, it is easy to check that:
\begin{equation} \label{Gumbel.Survival.Eta.1.2}
	\Prob\left( \eta_1>s, \eta_2>t \right) = \E \left[ \exp\left(- A^1_s -A^2_t - \delta A^1_s A^2_t \right) \right]
\end{equation}
\begin{equation} \label{Gumbel.Survival.Eta.3}
	\Prob\left( \eta_3>s \right) =\E \left[  \exp \left( -A^3_s \right) \right]
\end{equation}
Define $\tau_1$ and $\tau_2$ as in equation \eqref{Main.Model}. Then, we can get the join distribution of $(\tau_1, \tau_2)$:
\begin{theorem} [Survival function] \label{Gumbel.Thm.Survival Fn tau1, tau2}
	Suppose $\alpha^i: \mathbb{R}^d \mapsto [0, \infty)$ for $i=1,2,3$ are non-random positive continuous functions, which implies that $A^i_s$ are continuous and strictly increasing. Assume further that $\lim_{s\rightarrow\infty}A^i_s=\infty$ a.s. Then,
	\begin{equation*} 
		\overline{F}_{(\tau_1, \tau_2)}(s,t):= \Prob\left( \tau_1 > s, \tau_2 >t\right) = \E \left[\exp\left[- A^1_s - A^2_t - \delta A^1_s A^2_t - A^3_{s \vee t} \right] \right] .
	\end{equation*}
\end{theorem}

\begin{proof}
	Let $\tilde{\Prob}_{(s,t)}(\cdot):= \Prob\left(\cdot |(X_u)_{0\leq u \leq (s \vee t)} \right)$. By definition of $\tau_1$ and $\tau_2$, we have,
	\begin{align*} 
		\tilde{\Prob}_{(s,t)}\left( \tau_1 > s, \tau_2 >t\right) & =  \tilde{\Prob}_{(s,t)} \left( \min(\eta_1, \eta_3) > s, \min(\eta_2, \eta_3) >t \right) \nonumber \\
		& = \tilde{\Prob}_{(s,t)} \left( \eta_1>s,  \eta_2 > t \right)  \tilde{\Prob}_{(s,t)} \left( \eta_3>s, \eta_3>t \right) \nonumber \\
		& = \tilde{\Prob}_{(s,t)} \left( \eta_1>s,  \eta_2 > t \right) \tilde{\Prob}_{(s,t)} \left( \eta_3>\max(s,t) \right) \nonumber \\
		& = \exp\left[ -A^1_s -A^2_t -\delta A^1_s A^2_t -A^3_{s \vee t}\right] \nonumber \\
		& = \begin{cases}
			\exp\left[ -A^1_s -A^2_t -\delta A^1_s A^2_t - A^3_s  \right] & s>t \\
			\exp\left[ -A^1_s -A^2_t -\delta A^1_s A^2_t - A^3_t  \right] & s \leq t.
		\end{cases}
	\end{align*}
	The result follows by taking the expectation.
\end{proof}

\begin{remark}
	From Theorem \ref{Gumbel.Thm.Survival Fn tau1, tau2}, it is clear that $\tau_1$ is not independent of $\tau_2$. 
\end{remark}

\begin{remark}
	The marginal distribution of $\tau_i$ is given by:
	\begin{equation} \label{Gumbel.Marginal.Survival.Function}
		\Prob(\tau_i>s) = \Prob(\min(\eta_i, \eta_3) > s) = \E \left[\exp\left(-\int_0^s (\alpha^i_r+\alpha^3_r)dr\right) \right]
	\end{equation}
	which, as in Section \ref{Sect.Survival.Function}, coincides with the marginal distribution in the Cox construction, see Lando \cite{Lando.Cox} \cite{Lando.Credit.Risk.Book}.
\end{remark}

\begin{proposition}[Probability of the two stopping times being equal] \par  
	\begin{equation}  \label{Gumbel.Proof.Tau1.Equal.Tau2}
		\Prob \left( \tau_1 = \tau_2 \right) = \E \left[ \int_0^\infty \alpha^3_t \exp\left[ -(A^1_t+A^2_t+A^3_t)  - \delta A^1_tA^2_t \right] dt \right]
	\end{equation}
	which is greater than $0$ as long as $\alpha^3_t$ is not identically equal to $0$
\end{proposition}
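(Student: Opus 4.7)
The plan is to reduce the event $\{\tau_1=\tau_2\}$ to a statement purely about the underlying variables $\eta_1,\eta_2,\eta_3$, and then to exploit the fact that $Z_3$ (hence $\eta_3$) is independent of $(Z_1,Z_2)$ (hence of $(\eta_1,\eta_2)$), while the joint survival of $(\eta_1,\eta_2)$ is already known from \eqref{Gumbel.Survival.Eta.1.2}. Throughout I would work under the conditional probability $\tilde{\Prob}(\cdot):=\Prob(\,\cdot\mid(X_u)_{u\geq 0})$, and bring the expectation out at the very end, exactly as in the proof of Corollary \ref{Prob.tau1.equal.tau2.random}.

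First I would argue that $\{\tau_1=\tau_2\}=\{\eta_3\leq\eta_1\wedge\eta_2\}$ up to a $\tilde{\Prob}$-null set. The Gumbel bivariate law with continuous marginals makes the distribution of $(\eta_1,\eta_2)$ absolutely continuous on the diagonal, so $\tilde{\Prob}(\eta_1=\eta_2)=0$; similarly $\tilde{\Prob}(\eta_3=\eta_i)=0$. Hence if $\eta_3>\eta_1\wedge\eta_2$ then exactly one of $\tau_1,\tau_2$ equals the corresponding $\eta_i$ (the minimizer) and the other is either the other $\eta_i$ or $\eta_3$, but in every subcase $\tau_1\neq\tau_2$. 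Conversely, if $\eta_3\leq\eta_1\wedge\eta_2$ then $\tau_1=\tau_2=\eta_3$.

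Next I would compute $\tilde{\Prob}(\eta_3\leq\eta_1\wedge\eta_2)$ by conditioning on the value of $\eta_3$. Under $\tilde{\Prob}$, $\eta_3$ is independent of $(\eta_1,\eta_2)$ with density $f_{\eta_3}(t)=\alpha^3_t e^{-A^3_t}$, so
\begin{equation*}
\tilde{\Prob}(\eta_3\leq\eta_1\wedge\eta_2)=\int_0^\infty \alpha^3_t\, e^{-A^3_t}\,\tilde{\Prob}(\eta_1\geq t,\eta_2\geq t)\, dt.
\end{equation*}
Applying \eqref{Gumbel.Survival.Eta.1.2} with $s=t$ gives $\tilde{\Prob}(\eta_1\geq t,\eta_2\geq t)=\exp(-A^1_t-A^2_t-\delta A^1_t A^2_t)$, and multiplying by $e^{-A^3_t}$ yields precisely the integrand in the claim.

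Finally, taking expectation over $(X_u)_{u\geq 0}$ produces the stated formula. The positivity assertion follows because $\alpha^3_t\not\equiv 0$ and the exponential factor is strictly positive, so the integrand is strictly positive on a set of positive Lebesgue measure with positive probability. I do not foresee any serious obstacle: the only subtlety is the measure-theoretic identification of $\{\tau_1=\tau_2\}$ with $\{\eta_3\leq\eta_1\wedge\eta_2\}$, which requires the continuity of the Gumbel marginals to discard the diagonal $\{\eta_1=\eta_2\}$; once that is handled, the rest is a one-dimensional integration using conditional independence.
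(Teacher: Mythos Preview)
Your proposal is correct and follows essentially the same approach as the paper: identify $\{\tau_1=\tau_2\}$ with $\{\eta_3<\eta_1,\eta_3<\eta_2\}$, then integrate the joint survival function of $(\eta_1,\eta_2)$ from \eqref{Gumbel.Survival.Eta.1.2} against the density of $\eta_3$ using their conditional independence, and finally take expectation over $X$. If anything, you are more careful than the paper in justifying why the event identity holds up to a null set (the paper simply asserts the equality of events without discussing the diagonals).
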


The proof of this Proposition follows by a similar argument as the one used in the proof of Theorem \ref{Theorem.Joint.Distribution} to get $\Prob\left(B^C\right)$, so we leave it to the reader.

This is the general set-up. For the rest of this Section, we assume that for $i=1,2,3$, we have $\alpha^i(X_t) \equiv \lambda_i \in \mathbb{R}^+$ for all $t\geq 0$ to get tractable computations. Under this assumption, note the following:
\begin{enumerate}
	\item $\Prob \left(\eta_1>s, \eta_2>t\right) = \exp \left[-\lambda_1 s - \lambda_2 t - \delta \lambda_1 \lambda_2 s t \right]$
	\item $\Prob\left( \tau_1>s, \tau_2> t \right) = \exp\left[ -\lambda_1 s - \lambda_2 t - \delta \lambda_1 \lambda_2 s t - \lambda_3 (s\vee t) \right]$. That is $(\tau_1,\tau_2)$ follow a bivariate exponential (BVE) that is a combination of the Marshall-Olkin BVE (see Marshall and Olkin \cite{MarshallOlkin}) and the Gumbel BVE (see \cite{Gumbel}). Actually, if $\delta=0$, we recover the BVE of Marshall and Olkin and if $\lambda_3=0$ we recover the BVE of Gumbel.
	\item $\Prob(\tau_1>t) = \exp\left(-(\lambda_1+\lambda_3)t\right) $ and $\Prob(\tau_2>t) = \exp\left(-(\lambda_2 +\lambda_3)t\right) $, i.e., marginally $\tau_i\sim\text{Exp}(\lambda_i+\lambda_3)$.
\end{enumerate}

Before showing that one can get a negative covariance between the stopping times, we show in the next proposition that, by allowing the possibility of a negative covariance, the probability of the two stopping times being equal is smaller.

\begin{proposition} \par
	Fix some values of $\lambda_1, \lambda_2, \lambda_3$. If $\delta\neq 0$, the probability of $\tau_1$ being equal to $\tau_2$ is smaller than the probability of $\tau_1$ being equal to $\tau_2$ for the case of $\delta=0$
\end{proposition}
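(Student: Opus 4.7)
The plan is to reduce the claim to a direct pointwise comparison of two integrands, using the explicit formula for $\Prob(\tau_1=\tau_2)$ already derived in the preceding proposition.

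First, I would specialize the general identity
\begin{equation*}
\Prob(\tau_1=\tau_2) = \E\left[\int_0^\infty \alpha^3_t \exp\left[-(A^1_t+A^2_t+A^3_t)-\delta A^1_t A^2_t\right] dt\right]
\end{equation*}
to the constant-intensity setting $\alpha^i(X_t)\equiv\lambda_i$, so that $A^i_t=\lambda_i t$. The expectation is then trivial (there is no remaining randomness), and one obtains
\begin{equation*}
\Prob(\tau_1=\tau_2) \;=\; \int_0^\infty \lambda_3 \exp\!\left[-(\lambda_1+\lambda_2+\lambda_3)t-\delta\lambda_1\lambda_2 t^2\right] dt \;=:\; I(\delta).
\end{equation*}

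Second, I would compare $I(\delta)$ with $I(0) = \tfrac{\lambda_3}{\lambda_1+\lambda_2+\lambda_3}$ (the value already computed in Example \ref{Example.Constant.Intensity}). Recall that the Gumbel bivariate exponential requires $\delta\in[0,1]$, so the hypothesis $\delta\neq 0$ means $\delta>0$. Assuming the non-degenerate case $\lambda_1,\lambda_2>0$, the factor $\exp(-\delta\lambda_1\lambda_2 t^2)$ is strictly less than $1$ for every $t>0$ and equals $1$ only at $t=0$. Consequently, the integrand of $I(\delta)$ is dominated by, and strictly smaller on $(0,\infty)$ than, the integrand of $I(0)$, while both are continuous and nonnegative. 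Integrating preserves strict inequality, giving $I(\delta)<I(0)$, which is the desired conclusion.

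Third, for completeness I would remark that the argument is vacuous if $\lambda_1\lambda_2=0$: in that degenerate case the quadratic term $\delta\lambda_1\lambda_2 t^2$ vanishes, so the two probabilities are equal regardless of $\delta$. No genuine obstacle appears in this proof; the only subtle point is the observation that the Gumbel constraint forces $\delta>0$, so the exponential perturbation is one-sided, which is precisely what makes the comparison strict rather than merely an identity at $\delta=0$.
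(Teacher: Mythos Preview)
Your argument is correct and takes a genuinely different, more elementary route than the paper. The paper first rewrites
\[
\Prob_{\delta\neq 0}(\tau_1=\tau_2)=\int_0^\infty \lambda_3\exp\!\left[-(\lambda_1+\lambda_2+\lambda_3)t-\delta\lambda_1\lambda_2 t^2\right]dt
\]
by completing the square and changing variables into a Gaussian tail integral of the form $C\int_a^\infty e^{-u^2}\,du$, and then invokes the classical bound $\int_x^\infty e^{-u^2}\,du\le e^{-x^2}/(2x)$ to recover exactly $\lambda_3/(\lambda_1+\lambda_2+\lambda_3)$ as an upper bound. You bypass all of this by the one-line observation that, for $\delta>0$ and $\lambda_1\lambda_2>0$, the extra factor $\exp(-\delta\lambda_1\lambda_2 t^2)$ is strictly less than $1$ for every $t>0$, so the integrand is pointwise dominated and the strict inequality $I(\delta)<I(0)$ follows at once. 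Your approach is shorter, delivers the strict inequality directly, and even works verbatim in the general (random-intensity) setting of the preceding proposition, not just for constant $\lambda_i$. What the paper's detour buys is an explicit closed-form expression for $\Prob_{\delta\neq 0}(\tau_1=\tau_2)$ in terms of the complementary error function, which may be of independent interest but is not needed for the comparison statement itself. Your remark on the degenerate case $\lambda_1\lambda_2=0$ is also a useful clarification that the paper leaves implicit.
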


\begin{proof}
	Let $\Prob_{\delta\neq 0}(A)$ and $\Prob_{\delta = 0}(A)$ stand for the probability of an event $A$ under the law of $(\tau_1, \tau_2)$ when $\delta\neq 0$ and when $\delta = 0$ respectively
	
	Recall from Example \ref{Example.Constant.Intensity} that:
	\begin{equation}
		\Prob_{\delta=0}\left(\tau_1 = \tau_2 \right) = \frac{\lambda_3}{\lambda_1+\lambda_2+\lambda_3}
	\end{equation}
	Also, setting $\alpha^i_t=\lambda_i$ in \eqref{Gumbel.Proof.Tau1.Equal.Tau2}, we get:
	\begin{align}
		\Prob_{\delta\neq 0}\left( \tau_1 =\tau_2 \right) &  = \int_0^\infty \lambda_3 \exp\left[ -(\lambda_1+ \lambda_2 + \lambda_3) t - \delta \lambda_1 \lambda_2 t^2 \right] dt  \nonumber \\
		& = \lambda_3 \exp\left[\frac{\left(\lambda_1+\lambda_2+\lambda_3\right)^2}{4 \delta \lambda_1 \lambda_2} \right] \int_0^\infty \exp \left[- \left(\sqrt{\delta \lambda_1 \lambda_2} t + \frac{\lambda_1 + \lambda_2 + \lambda_3}{2 \sqrt{\delta \lambda_1 \lambda_2}}\right)^2 \right] dt \nonumber \\
		& = \frac{\lambda_3}{\sqrt{\delta \lambda_1 \lambda_2}} \exp\left[\frac{\left(\lambda_1+\lambda_2+\lambda_3\right)^2}{4 \delta \lambda_1 \lambda_2} \right] \int_{\frac{\lambda_1 + \lambda_2 + \lambda_3}{2 \sqrt{\delta \lambda_1 \lambda_2}}}^\infty e^{-u^2} du
	\end{align}
	where the second equality follows by completing the square and it is only valid if $\delta \neq 0$. The third equality is just a change of variable.
	
	Then, using the well-known bound $\int_x^\infty e^{-u^2} du \leq \frac{e^{-x^2}}{2 x}  \text{ for all } x>0$, we have that:
	\begin{align*}
		\frac{\lambda_3}{\sqrt{\delta \lambda_1 \lambda_2}} & \exp\left[\frac{\left(\lambda_1+\lambda_2+\lambda_3\right)^2}{4 \delta \lambda_1 \lambda_2} \right] \int_{\frac{\lambda_1 + \lambda_2 + \lambda_3}{2 \sqrt{\delta \lambda_1 \lambda_2}}}^\infty e^{-u^2} du \\ 
		& \leq \frac{\lambda_3}{\sqrt{\delta \lambda_1 \lambda_2}} \exp\left[\frac{\left(\lambda_1+\lambda_2+\lambda_3\right)^2}{4 \delta \lambda_1 \lambda_2} \right] \exp\left[ -\frac{\left(\lambda_1+\lambda_2+\lambda_3\right)^2}{4 \delta \lambda_1 \lambda_2} \right] \frac{2 \sqrt{ \delta \lambda_1 \lambda_2}}{2\left(\lambda_1 + \lambda_2 + \lambda_3\right)} \\
		& = \frac{\lambda_3}{\lambda_1 + \lambda_2 + \lambda_3}
	\end{align*}
	which shows, as desired:
	\begin{equation}
		\Prob_{\delta\neq 0} \left(\tau_1  = \tau_2 \right) \leq \Prob_{\delta = 0} \left(\tau_1  = \tau_2 \right)
	\end{equation}
\end{proof}

\begin{proposition}[Negative Covariance] \label{Proposition.Neg.Cov} \par
	Suppose $\lambda_1 = c_1 \lambda_3$, $\lambda_2 = c_2 \lambda_3$ and $\delta=1$. If $5 c_1 c_2 \geq 4 \left(c_1 +c_2 +1\right)$, then $\text{Cov}\left(\tau_1, \tau_2\right) <0$.
\end{proposition}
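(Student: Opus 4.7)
The plan is to compute $\C(\tau_1,\tau_2) = \E[\tau_1\tau_2] - \E[\tau_1]\E[\tau_2]$ explicitly and to estimate its sign. From the marginal formula \eqref{Gumbel.Marginal.Survival.Function} applied with $\alpha^i \equiv c_i\lambda_3$, one has $\tau_i \sim \mathrm{Exp}((c_i+1)\lambda_3)$, so that $\E[\tau_1]\E[\tau_2] = 1/[(c_1+1)(c_2+1)\lambda_3^2]$. For the mixed moment, the identity $\E[\tau_1\tau_2] = \int_0^\infty \int_0^\infty \Prob(\tau_1 > s, \tau_2 > t)\,ds\,dt$ combined with Theorem \ref{Gumbel.Thm.Survival Fn tau1, tau2} for $\delta=1$ gives a closed-form double integral. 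Rescaling $u = \lambda_3 s,\ v = \lambda_3 t$, splitting on $\{u \le v\}$ versus $\{u > v\}$, and evaluating the inner integral via Fubini yields
\begin{equation*}
\lambda_3^2\,\E[\tau_1\tau_2] = \int_0^\infty \frac{e^{-(c_1+c_2+1)u - c_1c_2 u^2}}{(c_2+1) + c_1 c_2 u}\,du + \int_0^\infty \frac{e^{-(c_1+c_2+1)v - c_1c_2 v^2}}{(c_1+1) + c_1 c_2 v}\,dv.
\end{equation*}

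Denoting these two integrals by $J_1$ and $J_2$, the desired inequality $\C(\tau_1,\tau_2) < 0$ becomes $J_1 + J_2 < 1/[(c_1+1)(c_2+1)]$. The Gaussian factor $e^{-c_1c_2 u^2}$ in the numerator is the source of the negative contribution to the covariance; indeed, formally dropping it recovers the strictly positive Marshall--Olkin baseline covariance corresponding to Example \ref{Example.Constant.Intensity}. My next step is to bound $J_1$ and $J_2$ sharply enough to expose this Gaussian suppression. The plan is to substitute $u = w/\sqrt{c_1c_2}$ so that the quadratic becomes a standard $e^{-w^2}$, then bound the resulting integral---which involves the complementary error function---using $\mathrm{erfc}(x) \le e^{-x^2}/(x\sqrt{\pi})$ together with an AM--GM estimate on the denominator designed to keep the $c_1 c_2$ dependence explicit. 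The algebraic comparison with $1/[(c_1+1)(c_2+1)]$ should then, after clearing denominators, reduce to a polynomial inequality whose sufficient form is exactly $5c_1c_2 \ge 4(c_1+c_2+1)$.

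The main obstacle is the sharpness of this bound. The integrands $e^{-au - du^2}/(b + du)$ have no elementary antiderivative, and a lazy cascade of standard bounds (first $e^{-du^2} \le 1$, then $E_1(z) \le e^{-z}/z$ on the resulting exponential integral) produces the too-loose estimate $J_1 + J_2 \le (c_1+c_2+2)/[(c_1+c_2+1)(c_1+1)(c_2+1)]$, which always exceeds $1/[(c_1+1)(c_2+1)]$ and therefore yields nothing. A single well-chosen estimate that simultaneously captures both the Gaussian decay in $-c_1c_2 u^2$ and the rational damping by $(c_i+1)+c_1c_2u$ is required; this is the technical crux. Once such an estimate is in hand, the remaining steps---substituting the bound into the target inequality and verifying the resulting polynomial inequality---should be routine algebra producing precisely the hypothesis $5c_1c_2 \ge 4(c_1+c_2+1)$.
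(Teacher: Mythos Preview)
Your approach matches the paper's: the same double-integral expression for $\E[\tau_1\tau_2]$ via the joint survival function, the same splitting into $J_1+J_2$, the same Gaussian substitution (completing the square), and then a Gaussian-tail estimate. The paper's version of your ``denominator estimate'' is simply to bound the rational factor by its value at the lower limit of integration: after the change of variables one has $u \ge k/2$ with $k := (c_1+c_2+1)/\sqrt{c_1c_2}$, which gives $\frac{1}{2\sqrt{c_1c_2}\,u + c_j - c_i + 1} \le \frac{1}{2(c_j+1)}$. This reduces the problem to showing
\[
\Bigl(k + \tfrac{1}{\sqrt{c_1c_2}}\Bigr)\, e^{k^2/4} \int_{k/2}^\infty e^{-u^2}\,du \;<\; 1,
\]
and the hypothesis $5c_1c_2 \ge 4(c_1+c_2+1)$ enters here, precisely as the bound $\tfrac{1}{\sqrt{c_1c_2}} \le \tfrac{5}{4k}$.

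The one genuine gap in your plan is the choice of Gaussian-tail bound. The standard inequality $\int_x^\infty e^{-u^2}\,du \le e^{-x^2}/(2x)$ (equivalently your $\mathrm{erfc}(x) \le e^{-x^2}/(x\sqrt{\pi})$) is just barely too weak: inserting it into the display above yields $(k + \tfrac{5}{4k})\cdot \tfrac{1}{k} = 1 + \tfrac{5}{4k^2} > 1$, so nothing follows. The paper closes this gap by proving, in its Appendix, the sharper estimate
\[
\int_x^\infty e^{-u^2}\,du \;<\; \frac{2x}{4x^2 + 5/4}\,e^{-x^2} \qquad (x \ge 1),
\]
which at $x = k/2$ gives $(k + \tfrac{5}{4k}) \cdot \tfrac{k}{k^2 + 5/4} = 1$, with strict inequality because $5/4$ lies strictly below the optimal constant $\ell^* \approx 1.279$ for which such a bound holds. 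In short, the constant $5$ in the hypothesis, the constant $5/4$ in the refined tail bound, and the fact that $k > 2$ (so $k/2 \ge 1$) are all calibrated to one another; the final step is not routine algebra producing the hypothesis, but rather a tuned, non-standard erfc inequality that must be established separately.
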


\begin{proof}
	Using the result of Young \cite{Young}, as in the proof of Proposition \ref{Prop.L2.Distance} as well as Theorem \ref{Gumbel.Thm.Survival Fn tau1, tau2}, we have that:
	\begin{align} \label{Gumbel.Expectation.Tau1.Tau2.General}
		\E \left( \tau_1 \tau_2\right) = & \int_0^\infty \int_0^\infty \exp\left[ -\lambda_1 x - \lambda_2 y-
		\lambda_3 (x\vee y) - \lambda_1 \lambda_2 x y \right] dx dy \nonumber \\
		= & \int_0^\infty \int_0^y \exp\left[ -\lambda_1 x- \lambda_2  y -  \lambda_1 \lambda_2 x y - 
		\lambda_3 y  \right] dx dy \nonumber \\
		& + \int_0^\infty \int_y^\infty \exp\left[ -\lambda_1 x- \lambda_2  y -  \lambda_1 \lambda_2 x y - 
		\lambda_3 x  \right] dx dy \nonumber \\
		= & \int_0^\infty \int_x^\infty \exp\left[ -\lambda_1 x- \lambda_2  y -  \lambda_1 \lambda_2 x y - 
		\lambda_3 y  \right] dy dx \nonumber \\
		& + \int_0^\infty \int_y^\infty \exp\left[ -\lambda_1 x- \lambda_2  y -  \lambda_1 \lambda_2 x y - 
		\lambda_3 x  \right] dx dy \nonumber \\
		= & \int_0^\infty \frac{1}{\lambda_2 + \lambda_3 +  \lambda_1 \lambda_2 x } \exp \left[ -
		(\lambda_1+\lambda_2 + \lambda_3) x -  \lambda_1 \lambda_2 x^2 \right)]dx \nonumber \\
		& + \int_0^\infty \frac{1}{\lambda_1 + \lambda_3 +  \lambda_1 \lambda_2 y } \exp \left[ -
		(\lambda_1+\lambda_2 + \lambda_3) y -  \lambda_1 \lambda_2 y^2 \right)]dy 
	\end{align}
	We reduce the previous equation even further by completing the square, for instance, take the first
	summand:
	\begin{multline} \label{Proof.Neg.Cov.Integral1}
		\int_0^\infty \frac{1}{\lambda_2 + \lambda_3 + \lambda_1 \lambda_2 x } \exp \left[ -
		(\lambda_1+\lambda_2 + \lambda_3) x -  \lambda_1 \lambda_2 x^2 \right)]dx  \\
		= \exp \left[ \frac{\left( \lambda_1 + \lambda_2 + \lambda_3 \right)^2 }{ 4 \lambda_1 \lambda_2} \right] 
		\int_0^\infty  \frac{1}{\lambda_2 + \lambda_3 + \lambda_1 \lambda_2 x } \exp \left[  -  
		\left( \sqrt{\lambda_1 \lambda_2} x  + \frac{1}{2} \left( \frac{\lambda_1+ \lambda_2 +\lambda_3}
		{\sqrt{\lambda_1 \lambda_2}} \right) \right)^2 \right] dx \\
		= \frac{2}{\sqrt{\lambda_1 \lambda_2}} \exp \left[ \frac{\left( \lambda_1 + \lambda_2 + \lambda_3 
			\right)^2 }{ 4 \lambda_1 \lambda_2} \right] 
		\int\limits_{\frac{1}{2} \left( \frac{\lambda_1+ \lambda_2 +\lambda_3}
			{\sqrt{\lambda_1 \lambda_2}} \right)  }^\infty  \frac{1}{ 2 \sqrt{\lambda_1 \lambda_2} u + \lambda_2 + 
			\lambda_3 - \lambda_1} e^{-u^2} du 
	\end{multline}
	Similarly, the second summand in \eqref{Gumbel.Expectation.Tau1.Tau2.General} reduces to:
	\begin{multline} \label{Proof.Neg.Cov.Integral2}
		\int_0^\infty \frac{1}{\lambda_1 + \lambda_3 + \lambda_1 \lambda_2 y } \exp \left[ -
		(\lambda_1+\lambda_2 + \lambda_3) y -  \lambda_1 \lambda_2 y^2 \right)]dy \\
		= \frac{2}{\sqrt{\lambda_1 \lambda_2}} \exp \left[ \frac{\left( \lambda_1 + \lambda_2 + \lambda_3 
			\right)^2 }{ 4 \lambda_1 \lambda_2} \right] 
		\int\limits_{\frac{1}{2} \left( \frac{\lambda_1+ \lambda_2 +\lambda_3}
			{\sqrt{\lambda_1 \lambda_2}} \right)  }^\infty  \frac{1}{ 2 \sqrt{\lambda_1 \lambda_2} u + \lambda_1 + \lambda_3 - \lambda_2} e^{-u^2} du
	\end{multline}
	Using the fact that $\E \left(\tau_i\right)=\frac{1}{\lambda_i + \lambda_3}$ and equations  \eqref{Proof.Neg.Cov.Integral1} and \eqref{Proof.Neg.Cov.Integral2}, we get that:
	\begin{multline}
		\text{Cov}\left(\tau_1, \tau_2\right) =  \frac{2}{\sqrt{\lambda_1 \lambda_2}} \exp \left[ \frac{\left( \lambda_1 + \lambda_2 + \lambda_3 \right)^2 }{ 4 \lambda_1 \lambda_2} \right] 
		\int\limits_{\frac{1}{2} \left( \frac{\lambda_1+ \lambda_2 +\lambda_3}{\sqrt{\lambda_1 \lambda_2}} \right)  }^\infty  \left[  \frac{1}{ 2 \sqrt{\lambda_1 \lambda_2} u + \lambda_2 + \lambda_3 - \lambda_1}  \right. \\
		\left. + \frac{1}{ 2 \sqrt{\lambda_1 \lambda_2} u + \lambda_1 + \lambda_3 - \lambda_2}  \right]e^{-u^2} du - \left(\frac{1}{\lambda_1 + \lambda_3} \right) \left(\frac{1}{\lambda_2 + \lambda_3}\right)
	\end{multline}
	Using the assumption $\lambda_1 = c_1 \lambda_3$ and $\lambda_2 = c_2 \lambda_3$, we rewrite the previous expression to get:
	\begin{multline}
		\text{Cov}\left(\tau_1, \tau_2\right) =  \frac{2}{\lambda_3^2 \sqrt{c_1 c_2}} \exp \left[ \frac{\left( c_1 +c_2 + 1 \right)^2 }{ 4 c_1 c_2} \right] 
		\int\limits_{\frac{c_1+c_2+1}{2\sqrt{c_1 c_2}} }^\infty  \left[  \frac{1}{ 2 \sqrt{c_1 c_2} u + c_2 - c_1 + 1 }  \right. \\
		\left. + \frac{1}{ 2 \sqrt{c_1 c_2} u + c_1 -c_2 + 1}  \right]e^{-u^2} du - \frac{1}{\lambda_3^2} \left(\frac{1}{c_1 +1 }\right) \left(\frac{1}{c_2 +1 }\right)
	\end{multline}
	As $u \geq \frac{c_1+c_2+1}{2\sqrt{c_1 c_2}}  $, we have that $\frac{1}{ 2 \sqrt{c_1 c_2} u + c_2 - c_1 + 1 }  \leq \frac{1}{2 \left( c_2 +1 \right)}$ and $\frac{1}{ 2 \sqrt{c_1 c_2} u + c_1 -c_2 + 1}  \leq \frac{1}{2 \left(c_1 +1\right)}$. Then:
	\begin{multline}
		\text{Cov}\left( \tau_1, \tau_2 \right) \leq \frac{1}{\lambda_3^2 \sqrt{c_1 c_2}} \left(  \frac{1}{  c_2 +1 }  + \frac{1}{  c_1 +1 } \right) \exp \left[ \frac{\left( c_1 +c_2 + 1 \right)^2 }{ 4 c_1 c_2} \right] 
		\int\limits_{\frac{c_1+c_2+1}{2\sqrt{c_1 c_2}} }^\infty  e^{-u^2} du \\
		- \frac{1}{\lambda_3^2} \left(\frac{1}{c_1 +1 }\right) \left(\frac{1}{c_2 +1 }\right) \\
		= \frac{1}{\lambda_3^2 \left(c
			_1+1\right) \left(c_2+1 \right)}\left[\frac{1}{\sqrt{c_1 c_2}} \left(c_1 +c_2 +2\right) \exp \left[ \frac{\left( c_1 +c_2 + 1 \right)^2 }{ 4 c_1 c_2} \right] \int\limits_{\frac{c_1+c_2+1}{2\sqrt{c_1 c_2}} }^\infty  e^{-u^2} du - 1 \right]
	\end{multline}
	Hence, to get a negative covariance, it suffices to show:
	\begin{equation} \label{Gumbel.NegCov.Proof.FinalStep}
		\frac{1}{\sqrt{c_1 c_2}} \left(c_1 +c_2 +2\right) \exp \left[ \frac{\left( c_1 +c_2 + 1 \right)^2 }{ 4 c_1 c_2} \right] \int\limits_{\frac{c_1+c_2+1}{2\sqrt{c_1 c_2}} }^\infty  e^{-u^2} du < 1
	\end{equation}
	Set $k := \frac{c_1 +c_2 +1}{\sqrt{c_1 c_2}}$ and note that our initial assumption implies that $\frac{1}{\sqrt{c_1 c_2}} \leq \frac{5}{4} \left( \frac{\sqrt{c _1 c_2}}{c_1 + c_2 +1}\right) = \frac{5}{4k} $. Hence:
	\begin{equation} \label{Bound.Erfc}
		\frac{1}{\sqrt{c_1 c_2}} \left(c_1 +c_2 +2\right) \exp \left[ \frac{\left( c_1 +c_2 + 1 \right)^2 }{ 4 c_1 c_2} \right] \int\limits_{\frac{c_1+c_2+1}{2\sqrt{c_1 c_2}} }^\infty  e^{-u^2} du  \leq \left(k + \frac{5}{4k}\right) e^{\frac{1}{4} k^2} \int\limits_{\frac{k}{2}}^\infty e^{-u^2} du
	\end{equation}
	By the proof in the \hyperref[Appendix]{Appendix}, one can see that the previous integral is always smaller than one when $k > 2 $, which is the case here because:
	\begin{equation*}
		\left(\sqrt{c_1} - \sqrt{c_2}\right)^2 +1 >0 \implies c_1 -2 \sqrt{c_1 c_2} +c_2 +1 > 0  \implies c_1+c_2 +1 > 2 \sqrt{c_1 c_2} 
	\end{equation*}
	Hence:
	\begin{equation}
		k = \frac{c_1+c_2+1}{\sqrt{c_1 c_2}} > \frac{2 \sqrt{c_1 c_2}}{\sqrt{c_1 c_2}} > 2
	\end{equation}
	This shows that equation \eqref{Gumbel.NegCov.Proof.FinalStep} holds and consequently, we have  a negative covariance.
\end{proof}

\begin{remark}
	If $c:=c_1=c_2$, the condition of Proposition \ref{Proposition.Neg.Cov} reduces to $5c^2-8c-4\geq 0$ which is equivalent to $c\geq 2$. The interpretation of this is that to get a negative covariance when $\lambda_1 = \lambda_2$, it suffices to have $\lambda_1, \lambda_2$ sufficiently large compared to $\lambda_3$.
\end{remark}

\section{APPLICATIONS} \label{Sect.Application}
\subsection{Application to Epidemiology} \label{Subsect.Application.Epidemiology}
Suppose we are interested in knowing what is the probability of $n$ people getting infected with COVID-19 at the exact same time. The time to infection of each person can be modeled as a stopping time. Some current models (see Britton and Pardoux \cite{Britton2019}) assume independence of these stopping times and thus the probability of them being equal is $0$. However, using our model we can weaken the independence assumption and conclude that:
\begin{theorem}
	If $\left(\tau_1, \tau_2, \dots, \tau_n \right)$ follows the joint distribution described in Section \ref{Sect.Generalization.K.ST}, then:
	\begin{align}
		\Prob\left( \tau_1 = \tau_2 = \dots = \tau_n \right)  = & \E\left[ \int_0^\infty \alpha^{(1,2,\dots, n)}(u) e^{-\sum\limits_{k=1}^n\sum\limits_{j\in\mathcal{C}^n_k} A^j_u}du \right]
	\end{align}
	The innermost sum in the exponent is taken over all the possible combinations $\mathcal{C}^n_k$ of $\binom{n}{k}$. For example, if $k=3$, $j$ could be $(1,3,5)$, $(2,3,n)$, etc; if $k=n$, $j$ can only be $(1,2,\dots, n)$
\end{theorem}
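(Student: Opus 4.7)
The plan is to extend the shock-process interpretation of Section \ref{Interpretation.Distribution} (generalized in Section \ref{Sect.Generalization.K.ST}) from two components to $n$, and to mimic the proof of Corollary \ref{Prob.tau1.equal.tau2.random} in this higher-dimensional setting. Let $\tilde{\Prob}(\cdot):=\Prob(\cdot\mid (X_u)_{u\geq 0})$. Under $\tilde{\Prob}$, the shock processes $\{\Lambda_S:\emptyset\neq S\subseteq\{1,\ldots,n\}\}$ become (pathwise) inhomogeneous Poisson processes with deterministic intensities $\alpha^S(X_u)$, and by construction they are independent. Letting $T_S$ denote the first-event time of $\Lambda_S$, each $T_S$ has conditional density $\alpha^S_u e^{-A^S_u}$ and conditional survival function $e^{-A^S_u}$, and the family $\{T_S\}$ is $\tilde{\Prob}$-independent.

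Next I would translate the event $\{\tau_1=\cdots=\tau_n\}$ into a condition on the $T_S$'s. Since $\tau_i=\min\{T_S:i\in S\}$, the common value must be attained by a single $T_{S^\star}$ (because conditionally independent Poisson processes share no jump times $\tilde{\Prob}$-a.s.), and $S^\star$ must contain every index $i$, forcing $S^\star=\{1,2,\ldots,n\}$. Therefore
\begin{equation*}
\{\tau_1=\cdots=\tau_n\}=\bigl\{T_{(1,\ldots,n)}<T_S\ \text{for every non-empty }S\subsetneq\{1,\ldots,n\}\bigr\}\qquad\tilde{\Prob}\text{-a.s.}
\end{equation*}

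The computation then proceeds by conditioning on $T_{(1,\ldots,n)}=u$ and using independence: the probability that every other $T_S$ exceeds $u$ factorizes as $\prod_{\emptyset\neq S\subsetneq\{1,\ldots,n\}} e^{-A^S_u}$. Integrating against the density $\alpha^{(1,\ldots,n)}_u e^{-A^{(1,\ldots,n)}_u}$ gives
\begin{equation*}
\tilde{\Prob}(\tau_1=\cdots=\tau_n)=\int_0^\infty \alpha^{(1,\ldots,n)}_u \exp\Bigl[-\sum_{\emptyset\neq S\subseteq\{1,\ldots,n\}} A^S_u\Bigr]\,du=\int_0^\infty \alpha^{(1,\ldots,n)}_u e^{-\sum_{k=1}^n\sum_{j\in\mathcal{C}^n_k} A^j_u}\,du,
\end{equation*}
where the final equality is merely a re-indexing of the non-empty subsets of $\{1,\ldots,n\}$ by their size. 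Taking the (unconditional) expectation and applying the tower property yields the formula stated in the theorem; specialized to $n=2$ this recovers Corollary \ref{Prob.tau1.equal.tau2.random}.

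The step I expect to require most care is the set-theoretic justification that, on the event $\{\tau_1=\cdots=\tau_n\}$, the common value really is carried by the single process $\Lambda_{(1,\ldots,n)}$. This rests on the elementary fact that two conditionally independent Poisson processes (with locally integrable intensities) have no common jump time under $\tilde{\Prob}$, applied to every pair $(\Lambda_S,\Lambda_{S'})$; after that, showing that no disjoint union of jump times of proper-subset shocks can simultaneously realize $\tau_i$ for every $i$ is a finite covering argument forcing the unique shock that carries the coincidence to be of type $\{1,2,\ldots,n\}$. Everything else is the same bookkeeping already performed in the $n=2$ case.
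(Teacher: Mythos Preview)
Your proposal is correct and follows essentially the same route as the paper: identify $\{\tau_1=\cdots=\tau_n\}$ with the event that the first arrival of the full-set shock $\Lambda_{(1,\ldots,n)}$ precedes the first arrivals of all proper-subset shocks, then integrate using conditional independence under $\tilde{\Prob}$ and take expectations. The only cosmetic difference is that the paper first computes the survival function of $M:=\min_{S\subsetneq\{1,\ldots,n\}}T_S$ and then evaluates $\tilde{\Prob}(T_{(1,\ldots,n)}<M)$ via a double integral, whereas you condition directly on $T_{(1,\ldots,n)}=u$; your added care in justifying the set equality (via the no-common-jumps property of independent Poisson processes) is more than the paper itself supplies.
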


\begin{remark}
	To be more specific if $n=3$, then we get,
	\begin{align}
		\Prob\left( \tau_1 = \tau_2 = \tau_3 \right)  = & \E \Biggl[ \int_0^\infty \alpha^{(1,2,3)}(u) \exp \left[-A^1_u-A^2_u-A^3_u \right.  \nonumber \\
		&  \left.  -A^{(1,2)}_u-A^{(1,3)}_u - A^{(2, 3)}_u -A^{(1,2,3)}_u   \right] \Biggr] du
	\end{align}
\end{remark}

\begin{proof}
	Using the notation from Section \ref{Sect.Generalization}, events in the process $\Lambda_{(1,2,\dots, n)}$ are shocks to the $n$ components of the system. Hence 
	\begin{multline*}
		\left\{ \tau_1=\tau_2=\dots=\tau_{n-1}=\tau_n\right\} =\\
		\left\{  \Lambda_{(1,2,\dots\ n)} \leq \min \left( \Lambda_1, \dots, \Lambda_n, \Lambda_{(1,2)}, \dots, \Lambda_{(n-1,n)}, \Lambda_{(1,2,3)}, \dots, \Lambda_{(1,2,\dots,n-1)}\right)  \right\}
	\end{multline*}
	Then, we find the distribution of:
	$$M:=\min \left( \Lambda_1, \dots, \Lambda_n, \Lambda_{(1,2)}, \dots, \Lambda_{(n-1,n)}, \Lambda_{(1,2,3)}, \dots, \Lambda_{(1,2,\dots,n-1)}\right) $$ 
	under the measure $\tilde{\Prob}(\cdot)$ which stands for $\Prob\left(\cdot | (X_s)_{s\geq 0} \right)$
	\begin{multline}
		\tilde{\mathbb{P}}\left[ M  > t \right] = \tilde{\mathbb{P}}\left[ \Lambda_1>t, \dots, \Lambda_n>t, \dots, \Lambda_{(n,n)}>t, \Lambda_{(1,2,3)}>t, \dots, \Lambda_{(1,2,\dots,n-1)}>t\right] \\
		= \exp\left[-A^1_t-\dots-A^n_t - A^{(1,2)}_t-\dots - A^{(n-1,n)}_t - A^{(1,2,3)}_t - \dots  -A^{(1,2,\dots, n-1)}_t \right]
	\end{multline}
	Hence, $M$, given $\left(X_{s}\right)_{s\geq 0}$, has a continuous distribution with density equal to:
	\begin{equation}
		f_{M}(t) = \left[\sum\limits_{k=1}^{n-1}\sum\limits_{j\in\mathcal{C}^n_k} \alpha^j_t\right] e^{-\sum\limits_{k=1}^{n-1}\sum\limits_{j\in\mathcal{C}^n_k} \alpha^j_t A^{j}_t} 
	\end{equation}
	The innermost sums are taken over all the possible combinations $\mathcal{C}^n_k$ of $\binom{n}{k}$. 
	Then,
	\begin{align}
		\tilde{\mathbb{P}}\left[\Lambda_{1,2,\dots, n} < M \right]& = \int_0^\infty\int_x^\infty\alpha^{(1,2, \dots, n)}_xe^{-A^{(1,2,\dots,n)}_x}f_{M}(y)\,dy\,dx \nonumber \\
		&= \int_0^\infty \alpha^{(1,2,\dots, n)}_u e^{-\sum\limits_{k=1}^n\sum\limits_{j\in\mathcal{C}^n_k} A^j_u}\,du
	\end{align}
	The result follows by taking the expectation.
\end{proof}

\begin{corollary}
	If $\alpha^i(X_t)=\alpha(X_t)$ for all $i=1,2,\dots, n$; $\alpha^{(i,j)}(X_t)=\frac{1}{2}\alpha(X_t)$ for all size 2 combinations in $\binom{n}{2}$; $\alpha^{(i,j,k)}(X_t)=\frac{1}{3}\alpha(X_t)$ for all size 3 combinations in $\binom{n}{3}$; $\dots$; $\alpha^{(1,2,\dots, n)}(X_t)=\frac{1}{n}\alpha(X_t)$. Then, we have:
	\begin{align}
		\Prob\left(\tau_1=\tau_2=\dots=\tau_n\right) = \frac{1}{n} \left[ \frac{1}{\binom{n}{1}+\frac{1}{2}\binom{n}{2}+\dots+\frac{1}{n}\binom{n}{n}} \right]
	\end{align}
\end{corollary}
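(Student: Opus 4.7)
The plan is to plug the given intensity specification directly into the formula provided by the preceding theorem and simplify. Writing $A_u := \int_0^u \alpha(X_s)\,ds$, the assumption that every intensity of ``level $k$'' (i.e., shared by exactly $k$ of the components) equals $\tfrac{1}{k}\alpha(X_t)$ means that for each subset $j \in \mathcal{C}^n_k$ one has $A^j_u = \tfrac{1}{k} A_u$. Substituting $\alpha^{(1,2,\dots,n)}(X_u) = \tfrac{1}{n}\alpha(X_u)$ into the theorem gives
\begin{equation*}
\Prob(\tau_1 = \cdots = \tau_n) = \E\left[\int_0^\infty \tfrac{1}{n}\alpha(X_u)\, \exp\!\Bigl(-\sum_{k=1}^n \sum_{j \in \mathcal{C}^n_k} A^j_u\Bigr) du\right].
\end{equation*}

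Next I would collapse the double sum in the exponent. Since every $j \in \mathcal{C}^n_k$ contributes the same value $\tfrac{1}{k}A_u$, and there are $\binom{n}{k}$ such subsets,
\begin{equation*}
\sum_{k=1}^n \sum_{j \in \mathcal{C}^n_k} A^j_u \;=\; \Bigl(\sum_{k=1}^n \tfrac{1}{k}\binom{n}{k}\Bigr) A_u \;=:\; C \cdot A_u.
\end{equation*}
Substituting back, the conditional expression becomes $\tfrac{1}{n}\int_0^\infty \alpha(X_u) e^{-C A_u}\,du$. The key analytic step is the deterministic change of variable $v = A_u$, $dv = \alpha(X_u)\,du$, which is valid pathwise because $A_u$ is continuous, strictly increasing, and $\lim_{u\to\infty}A_u = \infty$ a.s. (as assumed throughout the paper). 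This yields
\begin{equation*}
\int_0^\infty \alpha(X_u) e^{-C A_u}\,du \;=\; \int_0^\infty e^{-C v}\,dv \;=\; \frac{1}{C},
\end{equation*}
which is a deterministic constant, so the outer expectation is trivial.

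Combining everything gives $\Prob(\tau_1 = \cdots = \tau_n) = \tfrac{1}{n} \cdot \tfrac{1}{C}$, which is exactly the stated formula with $C = \binom{n}{1} + \tfrac{1}{2}\binom{n}{2} + \cdots + \tfrac{1}{n}\binom{n}{n}$. There is no real obstacle here: the entire content is bookkeeping of the exponent and a change of variable. The only minor point worth verifying explicitly is the justification of the pathwise substitution, which follows from the regularity hypotheses on $\alpha$ already imposed in Theorem~\ref{Thm.Survival Fn tau1, tau2}, so one can apply Fubini/Tonelli to pass the expectation through the integral without issue.
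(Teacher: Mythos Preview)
Your argument is correct and is exactly the intended one: the paper states this corollary without proof, but the substitution-and-change-of-variable computation you outline is precisely the method used earlier in Example~\ref{Proposition.Same.Intensity}, and there is nothing more to it.
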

\begin{corollary}
	If $\alpha^i(X_t)=\alpha(X_t)$ for all $i=1,2,\dots, n$; $\alpha^{(i,j)}(X_t)=2\alpha(X_t)$ for all size 2 combinations in $\binom{n}{2}$; $\alpha^{(i,j,k)}(X_t)=3\alpha(X_t)$ for all size 3 combinations in $\binom{n}{3}$; $\dots$; $\alpha^{(1,2,\dots, n)}(X_t)=n\alpha(X_t)$. Then, we have:
	\begin{align}
		\Prob\left(\tau_1=\tau_2=\dots=\tau_n\right) & = n \left[ \frac{1}{\binom{n}{1}+2\binom{n}{2}+\dots+n\binom{n}{n}} \right] \nonumber \\
		& = \frac{1}{2^{n-1}}
	\end{align}
\end{corollary}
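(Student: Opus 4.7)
The plan is to apply the theorem immediately preceding this corollary with the specific choice of intensities, reduce the sum in the exponent via a standard combinatorial identity, and then kill the remaining integral by a change of variables, which also eliminates the outer expectation.

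First I would substitute the given intensities into
\[
\Prob\left( \tau_1 = \tau_2 = \dots = \tau_n \right)  = \E\left[ \int_0^\infty \alpha^{(1,2,\dots, n)}(X_u)\, e^{-\sum\limits_{k=1}^n\sum\limits_{j\in\mathcal{C}^n_k} A^j_u}\,du \right].
\]
Since $\alpha^{(i_1,\dots,i_k)}(X_u) = k\,\alpha(X_u)$ for every size-$k$ subset, writing $A_u := \int_0^u \alpha(X_r)\,dr$ gives $A^{(i_1,\dots,i_k)}_u = k A_u$. The prefactor becomes $\alpha^{(1,\dots,n)}(X_u) = n\,\alpha(X_u)$, and there are $\binom{n}{k}$ subsets of size $k$, so the exponent collapses to
\[
\sum_{k=1}^n \sum_{j\in\mathcal{C}^n_k} A^j_u \;=\; A_u \sum_{k=1}^n k\binom{n}{k}.
\]

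Next I would invoke the standard identity $\sum_{k=1}^n k\binom{n}{k} = n\,2^{n-1}$ (obtained, e.g., by differentiating $(1+x)^n$ at $x=1$). The conditional expression therefore reduces to
\[
\int_0^\infty n\,\alpha(X_u)\, \exp\bigl(- n\,2^{n-1}\,A_u\bigr)\,du.
\]
The continuity and strict monotonicity of $A_u$, together with $A_u\to\infty$ a.s., justify the pathwise substitution $v = A_u$, $dv = \alpha(X_u)\,du$, and the integral becomes
\[
\int_0^\infty n\, e^{-n\,2^{n-1} v}\,dv \;=\; \frac{n}{n\,2^{n-1}} \;=\; \frac{1}{2^{n-1}}.
\]

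Since this value is deterministic, the outer expectation is trivial and yields $\Prob(\tau_1=\cdots=\tau_n)=1/2^{n-1}$, matching also the intermediate form $n/\bigl(\sum_{k=1}^n k\binom{n}{k}\bigr)$. There is no real obstacle here; the only point that deserves a line of justification is the change of variables $v=A_u$, which requires the regularity already assumed on $\alpha(\cdot)$ (positive continuous, compensator strictly increasing to infinity almost surely). Everything else is bookkeeping and a one-line binomial identity.
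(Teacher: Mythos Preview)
Your argument is correct. The paper states this corollary without proof, but your approach---collapsing the exponent via $\sum_{k=1}^n k\binom{n}{k}=n2^{n-1}$ and then evaluating the integral by the substitution $v=A_u$---is exactly the technique the paper uses in the analogous Example~\ref{Proposition.Same.Intensity}, so this is the intended route.
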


\subsection{Application to Engineering}
A classic problem in Operations Research, basically studied in queuing theory, is that of a complicated machine. The machine fails if one of its key parts fails. Knowing this, designers create a certain redundancy by doubling key components, so that if one fails, there is a back-up ready to assume its duties. To save money, however, one can have one back-up for two components. Suppose $\eta_1$ is the (random) failure time of one component, and $\eta_2$ is the (again, random) failure time of the second component. If they fail at the same time, then the machine itself will fail, since the solitary back-up cannot replace both components simultaneously. One usually considers such a situation to be unlikely, even very unlikely. If it were to happen, however, we would be interested in $P(\eta_1=\eta_2)$. In the conventional models, $P(\eta_1=\eta_2)$ is zero, since they are each exponential, and conditionally independent. However if we consider a third time, $\eta_3$, and if this third stopping time is the (once again, random) time of an external shock (such as the failure of the air conditioning unit, or a power failure with a surge when the power resumes, etc.), and if we let
\begin{equation}\label{engineering1}
	\tau_1=\eta_1\wedge\eta_3\text{ and }\tau_2=\eta_2\wedge\eta_3
\end{equation}
then we are in the case where $P(\tau_1=\tau_2)=\alpha>0$, and in special cases we can calculate the probability $\alpha$ with precision, and in other, more complicated situations, we can give upper and lower bounds, both, for $\alpha$.

A different kind of example, exemplified by the recent, and quite dramatic example is that of the collapse of the Champlain Towers South, in Surfside, Florida (just north of Miami Beach). The twelve story towers fell at night (1:30AM), and killed 98 people who were in their apartments at the time, and presumably even in their beds. The towers had a pool deck above a covered garage. The towers holding up the pool deck were too thin, and not strong enough to withstand the stresses imposed on them over four decades. Water was seen pouring into the parking garage only minutes before the collapse. 

In the main building that collapsed, structural columns were too narrow to accommodate enough rebar, meaning that contractors had to choose between cramming extra steel into a too-small column (which can create air pockets that
accelerate corrosion) or inadequately attaching  floor slabs to
their supports. Our model would have several stopping times, each one
representing the failure of a different component of the structure. Choosing two important ones, such as:
(1) The corrosion of the rebar supports within the concrete, due
to the salt air and massive strains due to violent weather,
which plagues the Florida coast during hurricane season; (2) The use of a low quality grade of concrete, violating
regulations of the local government in the construction of the 
towers, leading to concrete integrity decay due to 40 years of
seaside weather.

One way to model this is to take a vector of two Cox
constructions, using two independent exponentials $Z_1$ and $Z_2$
to construct our failure times $\tau_1$ and $\tau_2$. 
This then gives us that $\tau_1$ and $\tau_2$ are conditionally
independent, given the underlying filtration $\mathbb{F}=(\mathcal{F}_t)_{t\geq 0}$. This leads to $P(\tau_1=\tau_2)=0$, even if
they have the exact same compensators.

Other models have been proposed, such as the general
framework presented in the book of Anna Aksamit and
Monique Jeanblanc \cite{Aksamit.Jeanblanc}, where the random variables $Z_1$ and $Z_2$
are multivariate exponentials, but with a joint density describing
how they relate to each other. However, even in this more general setting, we have
$P(\tau_1=\tau_2)=0$.

Assuming it is the stopping times occurring simultaneously
that causes the collapse, we want a model that allows us to
have $P(\tau_1=\tau_2)>0$. 
Let's assume we have three standard Cox Constructions, with
independent exponentials $(Z_1; Z_2; Z_3)$, with different
compensators ($\int_0^t\alpha^i_sds $ for $ i=1,2,3$).

Call the three
stopping times $\eta_1,\eta_2,\eta_3$.
The time $\eta_3$ could be anything, such as a hurricane putting
heavy stress on the building, or an earthquake, or some other
external factor. (Current forensic analysis, which is ongoing as we write this, suggests that the collapse of the pool-deck created a seismic shock sufficient to precipitate the collapse of the south tower, and weakened the structural integrity of the north tower to such an extent that it was condemned and then deliberately destroyed.) However in this case we can take $\eta_3$ to be the
time of the flooding of the parking structure under the swimming pool and pool deck in general.

Simulations commissioned by the newspaper \emph{The Washington Post} and done by a team led by Khalid M. Mosalam of the University of California, Berkeley, show how that might have
happened and indicate that it is a plausible scenario (Swaine et al. \cite{Swaine}).

As in \eqref{engineering1}, we define
\begin{equation}\label{engineering2}
	\tau_1=\eta_1\wedge\eta_3\text{ and }\tau_2=\eta_2\wedge\eta_3,
\end{equation}
where we take the stopping time $\eta_3$ to be the time of the collapse of the pool deck.  As the noted engineer H. Petroski has pointed out \cite{Petroski} it is often the case that multiple things happen at once in order to precipitate a disaster such as the fall of the Champlain Tower South. Indeed, the forensic engineer R. Leon of Virginia Tech is quoted in \emph{American Society of Civil Engineers}, 2021 \cite{BW}: ``I think it is way too early to tell,"said Roberto Leon, P.E., F.SEI, Dist.M.ASCE, the D.H. Burrows Professor of Construction Engineering in the Charles Edward Via Jr. Department of Civil and Environmental Engineering at Virginia Tech.``It's going to require a very careful forensic approach here, because I don't think the building collapsed just because of one reason. What we tend to find in forensic investigations is that three or four things have to happen for a collapse to occur that is so catastrophic.''

Professor Leon is a widely respected authority in forensic civil engineering, and the key insight for us is his last statement that three or four things have to happen simultaneously for a catastrophic collapse. 

Less dramatic examples, but quite pertinent to the recent attention being paid to the decay of infrastructure around the US, provide more examples of the utility of this approach. A first example is the Interstate 10 Twin Span Bridge over Lake Pontchartrain north of New Orleans, LA. It was rendered completely unusable by Hurricane Katrina, but the naive explanation was demonstrated false by the fact that several other bridges that had the same structural design remained intact. Upon investigation, it was determined  that air trapped beneath the deck of the Interstate 10 bridges was a major contributing factor to the bridge's collapse. While major, it was not the only contributing factor (Chen et al. \cite{GC}).

A final example is the derailment of an Amtrak train near Joplin, Montana in September, 2021. 154 people were on board the train, and 44 passengers and crew were taken to area hospitals with injuries. The train was traveling at between 75 and 78 mph, just below the speed limit of 79 mph on that section of track when its emergency brakes were activated. The two locomotives and two railcars remained on the rails and eight cars derailed. Investigations of these types of events take years, but preliminary speculation is that the accident could have been caused by problems with the railroad or track, such as a rail that buckled under high heat, or the track itself giving way when the train passed over. Both might also be possible, leading to the two stopping times $\tau_1$ and $\tau_2$, and a situation where $P(\tau_1=\tau_2)>0$. See Hanson and Brown \cite{HB}.

\section{APPENDIX} \label{Appendix}
Showing that equation \eqref{Bound.Erfc} is smaller than 1 for any $k>2$ is equivalent to showing the following inequality:
\begin{equation} \label{Ineq.To.Prove}
	\int_x^\infty e^{-u^2} du \leq \frac{8x}{16x^2 +5} e^{-x^2} \text{ for all } x\geq1
\end{equation}
This is equivalent to the problem of finding an upper bound for the complementary error (erfc) function (recall that $\text{ercf}(x)= \frac{2}{\pi} \int_x^\infty e^{-u^2} du$), which has been widely studied in the literature. A classical result is the Chernoff-Rubin bound (see Chernoff \cite{Chernoff}). More recent work can be found in Chiani et al \cite{Chiani.etal}, Karagiannidis and Lioumpas \cite{Karagiannidis.Lioumpas}, and in Tanash and Riihonen \cite{Tanash.Riihonen}. However, to our knowledge, there is no bound of the type we need here.

We will show a slightly tighter bound than the one in \eqref{Bound.Erfc}, namely, we would like to optimize the following bound in $\ell $
\begin{equation} \label{General.Bound}
	\int_x^\infty e^{-u^2} du \leq \frac{2x}{4x^2 +\ell } e^{-x^2} \text{ for all } x\geq1
\end{equation}
When we say optimize, we mean that $\frac{2x}{4x^2 +\ell } e^{-x^2}$ is as close as possible (yet bigger) to $\int_x^\infty e^{-u^2} du$ for all $x\geq 1$.

In other words, define 
\begin{equation}
	h(x, \ell):= \frac{2x}{4x^2 +\ell } e^{-x^2}- \int_x^\infty e^{-u^2} du
\end{equation}
We want to find $\ell^*$ such that:
\begin{enumerate}
	\item $h(x, \ell^*) \leq h(x, \ell)$ for all $x\geq1$ and $\ell \neq \ell^*$.
	\item \label{Second.Item} $h(x, \ell^*)\geq 0$ for all $x\geq 1$ 
\end{enumerate}
To get the first item in the previous list, we need $\ell$ to be as large as possible. This is because $h(x, \ell)$ is decreasing in $\ell$. However, if we wish to have $h(x, \ell)\geq 0$, $\ell$ cannot be indiscriminately large.

To find $\ell^*$, let us first look at $\frac{\partial}{\partial x} h(x, \ell)$.
\begin{equation}
	\frac{\partial}{\partial x} h(x, \ell) = e^{-x^2}\left[- \frac{2\left(4x^2-\ell\right)}{\left(4x^2+\ell\right)^2}-\frac{4x^2}{4x^2+\ell }\right]+1
\end{equation}
We can find that:
\begin{equation}
	\frac{\partial}{\partial x} h(x, \ell)<0 \text{ when } x>\sqrt{\frac{\ell^2 +2 \ell}{8-4\ell }}
\end{equation}
Hence $h(x, \ell)$ is increasing when $x\in\left[1, \sqrt{\frac{\ell^2 +2 \ell}{8-4\ell}} \right)$, $h(x, \ell)$ is maximized when $x= \sqrt{\frac{\ell^2 +2 \ell}{8-4\ell }}$ and it is decreasing when $x\in\left(\sqrt{\frac{\ell^2 +2 \ell}{8-4\ell }}, \infty \right)$.

Also note that
\begin{equation}
	\lim_{x\rightarrow\infty} h(x, \ell) = 0 \text{ for any } \ell \text{ positive} 
\end{equation}

Hence, to satisfy the \hyperref[Second.Item]{second} item in the list above (i.e., $h(x, \ell^*)\geq 0$ for all $x\geq 1$), it suffices to pick $\ell^*$ such that $h(1, \ell^*)=0$, i.e.,
\begin{equation}
	h(1, \ell^*)= \frac{2}{4+\ell^*}e^{-1} - \int_1^\infty e^{-u^2} du  = 0
\end{equation}
Solving for $\ell^*$, we find that:
\begin{equation}
	\ell^* = \frac{2}{e\int_1^\infty e^{-u^2} du} - 4 \approx 1.27935
\end{equation}
We can then conclude that the maximum of $h(x, \ell^*)$ occurs when
\begin{equation}
	x= \sqrt{\frac{(\ell^*)^2 +2 \ell^*}{8-4\ell^*}} \approx 1.2043 \quad \text{Recall that } \ell^* \approx 1.27935
\end{equation}
We can then numerically calculate that
\begin{equation}
	h(1.2043, \ell^*) \approx 0.00131266
\end{equation}
This last equation means that the maximum difference between $\frac{2x}{4x^2 +\ell } e^{-x^2}$ and $ \int_x^\infty e^{-u^2} du$ is $0.00131266$

As $h(x, \ell)$ is decreasing as $\ell$ increases, $h(x,\ell)\geq 0$ for all $x\geq 1$ as long as $\ell < \ell^* \approx 1.27935$. In \eqref{Ineq.To.Prove}, we set $\ell=\frac{5}{4} < \ell^*$ and hence the work above shows that equation \eqref{Bound.Erfc} is indeed smaller than 1.

\bibliographystyle{abbrvnat}
\bibliography{bibliography.bib}
	
\end{document}